\newcommand{\begit}{\begin{itemize}}
	\newcommand{\eit}{\end{itemize}}
\newcommand{\bseq}{\begin{subequations}}
	\newcommand{\eseq}{\end{subequations}}
\newcommand{\bpat}{\begin{pmatrix}}
	\newcommand{\epat}{\end{pmatrix}}
\newcommand{\bmat}{\begin{bmatrix}}
	\newcommand{\emat}{\end{bmatrix}}
\newcommand{\beq}{\begin{equation}}
\newcommand{\eeq}{\end{equation}}
\newcommand{\bc}{\begin{cases}}
\newcommand{\ec}{\end{cases}}
\newcommand{\beqs}{\begin{equation*}}
\newcommand{\eeqs}{\end{equation*}}
\newcommand{\tm}{\tilde{m}}
\newcommand{\tpi}{\tilde{\pi}}
\newcommand{\rtr}{\rm{tr}}
\newcommand{\FF}{\mathbb{F}}
\newcommand{\LL}{\mathbb{L}}
\newcommand{\MM}{\mathbb{M}}	
\newcommand{\BB}{\mathbb{B}}
\newcommand{\PP}{\mathbb{P}}
\newcommand{\EE}{\mathbb{E}}
\newcommand{\RR}{\mathbb{R}}
\newcommand{\NN}{\mathbb{N}}
\newcommand{\II}{\mathbb{I}}
\newcommand{\HH}{\mathbb{H}}
\newcommand{\hY}{\widehat{Y}}
\newcommand{\hX}{\widehat{X}}
\newcommand{\hM}{\widehat{M}}
\newcommand{\tx}{\tilde{x}}
\newcommand{\hx}{\hat{x}}
\newcommand{\te}{\tilde{e}}
\newcommand{\tY}{\tilde{Y}}
\newcommand{\ind}{\mathbf{1}}
\newcommand*{\bigconcatenate}{\DOTSB\bigconcatenate@\slimits@}
\newcommand{\bigconcatenate@}{\mathop{\mathpalette\bigconcatenate@@\relax}}
\newcommand{\bigconcatenate@@}[2]{%
  \vcenter{\hbox{%
    \sbox\z@{$\m@th#1\bigoplus$}%
    \resizebox{\wd\z@}{!}{\rotatebox[origin=c]{90}{$\m@th#1\bm{\ominus}$}%
  }}}%
}
\newtheorem{mydef}{\it{Definition}}
\newtheorem{lem}{\it{Lemma}}
\newtheorem{thm}{\it{Theorem}}
\newtheorem{prop}{\it{Proposition}}
\newtheorem{rem}{\it{Remark}}
\newtheorem{pb}{\it{Problem}}
\begin{document}
%
\title{Optimal output-feedback control and separation principle for Markov jump linear systems modeling wireless networked control scenarios}



%
\author{Anastasia Impicciatore$^1$,
Yuriy Zacchia Lun$^2$,
Pierdomenico Pepe$^1$, 
and
Alessandro D'Innocenzo$^1$
\thanks{$^1$Center of Excellence for Research DEWS, Department of Information Engineering, Computer Science and Mathematics, University of L'Aquila, L'Aquila 67100,  Italy.\;{\tt\small anastasia.impicciatore@graduate.univaq.it},     {\tt\small alessandro.dinnocenzo@univaq.it},
       {\tt\small pierdomenico.pepe@univaq.it}}
\thanks{$^2$IMT School for advanced studies Lucca, Lucca, Italy.\;{\tt\small yuriy.zacchialun@imtlucca.it}       }}


\maketitle

\begin{abstract}
The communication channels used to convey information between the components of  wireless networked control systems (WNCSs) are subject to packet losses due to time-varying fading and interference. 
We consider a wireless networked control scenario, where the packet loss occurs in both the sensor–controller link (sensing link), and the controller–actuator link (actuation link). Moreover, we consider one time-step delay mode observations of the actuation link.
While the problems of state feedback optimal control and   stabilizability conditions for systems with one time-step delay mode observations of the actuation link have been  already  solved, we study the optimal output feedback control problem, and we derive a separation principle for the aforementioned wireless networked control
scenario. Particularly, we show that the optimal control problem (with one time step delay in the mode observation of actuation link state) and the optimal filtering problem can be solved independently under a TCP-like communication scheme.  
\end{abstract}


%
\IEEEpeerreviewmaketitle
\section{Introduction}\label{sec:intro}
From the automatic control perspective, the wireless communication
channels are the means to convey information
between sensors, actuators, and computational units of
wireless networked control systems. These communication
channels are frequently subject to time-varying fading and
interference, which may lead to packet losses. In the wireless networked control system (WNCS) literature
the packet dropouts have been modeled either as stochastic
or deterministic phenomena \cite{Hespanha2008}. The proposed deterministic
models specify packet losses in terms of time averages or
in terms of worst case bounds on the number of consecutive
dropouts (see e.g., \cite{heemels2010networked}, \cite{Ding2011}). For what concerns stochastic models,
a vast amount of research assumes memoryless packet drops,
so that dropouts are realizations of a Bernoulli process
(\cite{SCHENATOETAL}, \cite{gupta2009data}, \cite{PajicTAC2011}). Other works consider more general correlated
(bursty) packet losses and use a transition probability matrix
(TPM) of a finite-state stationary Markov channel (see e.g.,
the finite-state Markov modelling of Rayleigh, Rician and
Nakagami fading channels in \cite{SADEGHIETAL2008} and references therein) to
describe the stochastic process that rules packet dropouts (see
\cite{SCHENATOETAL}, \cite{GONCALVES2010}, \cite{BARAS2008}). In these works networked control systems with
missing packets are modeled as time-homogeneous Markov
jump linear systems (MJLSs, \cite{COSTA2005}).\\
When the packet drops affect the communication between the controller and actuator, the controller may know the outcome of the transmission and the state of the channel only after a time-step delay. While the problems of optimal linear
quadratic regulation and stabilizability with one time step delay have been solved in \cite{BARAS2008} and\cite{Lun2019stabilizability}, in this note we focus on {\it optimal-output feedback control} in observation of the operational mode of the system. The problem of output feedback control for Markov jump linear systems has been investigated in \cite{COSTA2005,Vargas2016}, where both the dynamics of the plant and the one of the observer are driven by the same Markov chain, moreover in \cite{COSTA2005,Vargas2016} the delay in the mode observation of the actuation channel is not considered.\\
This article generalizes the results of \cite{Lun2019stabilizability} to double-sided packet loss (see \cite{Ding2011}) as one of the contributions, since we consider that the packet loss occurs in both the sensor–controller link (sensing link), and the controller–actuator link (actuation link).
Moreover, we design the optimal output-feedback controller, that 
can be obtained solving the optimal control problem, and the  optimal filtering problem separately. The main difficulty of our approach can be found in the synthesis of the filtering gain,  accounting for both the operational mode of the sensing channel and also the occurrence of packet losses. In \cite{Mo2013} the filtering problem is solved using the Kalman filter for a single channel modelled by a two state Markov chain (hereafter  MC), that corresponds to simplified Gilbert channel. This result cannot be applied to the general Markov channel that requires {\small $2N$} states with {\small $N\!>\!2$}. Other estimation approaches are {\small $\mathcal{H}_2$} and {\small $\mathcal{H}_{\infty}$} estimation, see \cite{GONCALVES2010}, in which sub-optimal filters are obtained considering cluster
availability of the operational modes. We choose a Luenberger like observer instead of Kalman filter (see \cite{Mo2013}), because the filter dynamics depends just on the current mode of the sensing channel (rather than on the entire past history of modes).
We prove as our main contribution that the separation principle holds also in this case, coherently with the result presented in \cite{COSTA2005}.\\
\noindent The paper is organized as follows. In Section \ref{sec:NCSmodel} we present the networked control system and the information flow of actuation and sensing between the plant and the controller. In Section \ref{sec:LQR} we recall the solution of the optimal control problem in our setting derived in \cite{Lun2019stabilizability}; while in Section \ref{sec:Luobs} we present a Luenberger like observer and we provide a LMI approach to find the solution of the optimal filtering problem. In Section \ref{sec:sep_princ}, we state the separation principle.  
We provide a numerical example in Section \ref{sec:ex} and some concluding remarks in Section \ref{sec:Conclusions}. Proofs of Lemmas and Theorems are reported in the appendix.
\subsection{Notation and preliminaries}
\noindent In the following, 
{\small $\mathbb{N}_{0}$} denotes the set of non-negative integers, while 
{\small $\mathbb{F}$} indicates the set of either real or complex numbers.
The absolute value of a number is denoted by {\small $|\cdot|$}.
We recall that every finite-dimensional normed space over {\small $\mathbb{F}$} 
is 
a Banach space \cite{megginson1998introduction}, 
and denote the Banach space of all bounded linear operators of Banach space 
{\small $\mathbb{X}$} into Banach space {\small $\mathbb{Y}$}, by 
{\small $\mathbb{B}\!\left(\mathbb{X},\mathbb{Y}\right)$}. We set 
{\small $\mathbb{B}\!\left(\mathbb{X},\mathbb{X}\right)\!\triangleq\!
\mathbb{B}\!\left(\mathbb{X}\right)$}. 
The identity matrix of size {\small $n$} is indicated by {\small 
$\mathbb{I}_{n}$}. The operation of transposition is denoted by apostrophe, 
the complex conjugation by overbar, while the conjugate transposition is indicated 
by superscript {\small $^*$}. $\FF_*^{n\times n}$ and $\FF_+^{n\times n}$ denote the sets of Hermitian and positive semi-definite matrices, respectively. 
Let us define, for any positive integer {\small $
\mathbf{C}$}, and for any positive integer {\small $n$}, the following sets:
\begin{small}
\begin{align*}
\HH^{\mathbf{C}n,*}&\triangleq\{\mathbf{K}=[\mathbf{K}_m]_{m=1}^{\mathbf{C}};\mathbf{K}_m\in\FF^{n\times n}_*\},\\
\HH^{\mathbf{C}n,+}&\triangleq\{\mathbf{K}\in\HH^{\mathbf{C}n,*};\mathbf{K}_m\in\FF^{n\times n}_+, m=1,\ldots,\mathbf{C}\}.
\end{align*}
\end{small}We denote by {\small $\rho(\cdot)$} the spectral radius of a square matrix 
(or a bounded linear operator), i.e., the largest absolute value of its eigenvalues, 
and by {\small 
$\left\| \cdot \right\|$} either any vector norm or any matrix norm.
Since for finite-dimensional linear spaces all norms are equivalent
\cite[Theorem 4.27]{Kubrusly2001elements} from a topological viewpoint, as vector 
norms we use variants of vector $p$-norms.
For what concerns the matrix norms, we use 
{\small $\ell_1$} and {\small $\ell_2$} norms \cite[p.~341]{horn2012matrix},
that treat {\small $n_r\!\times\! n_c$} matrices as vectors of size
{\small $n_r n_c$}, and use one of the related $p$-norms. The definition of 
{\small $\ell_1$} and {\small $\ell_2$} norms is based on the operation of 
vectorization of a matrix, {\small $\mathrm{vec}(\cdot)$}, which is further used 
in the definition of the operator {\small $\hat{\varphi}(\cdot)$}, to be applied 
to any block matrix, e.g.,~{\small ${\bm \Phi}\!=\!\big[\Phi_{_{\!\!\;m}}\big]_{m=1}^{\mathbf{I}}$}: 
\begin{small}
\begin{equation*}
\hat{\varphi}\!\left({\bf\Phi}\right)\!\!\,\triangleq\!\!\,
\left[\mathrm{vec}\!\left({\Phi}_{_{\!\!\;1}}\!\!\;\right)\!,\dots,\mathrm{vec}\!\left({\Phi}_{_{\!\!\;\mathbf{I}}}\!\!\;\right)\right]'\!.
\end{equation*}
\end{small}The linear operator {\small $\hat{\varphi}(\cdot)$} is a uniform homeomorphisms, 
its inverse operator {\small $\hat{\varphi}^{-1}(\cdot)$} 
is uniformly continuous \cite{naylor2000linear},
and any bounded linear operator 
in {\small $\mathbb{B}\!\left(\FF^{Nn_r\times n_c}\right)$} can be 
represented in {\small $\mathbb{B}\!\left(\FF^{Nn_{r}n_c}\right)$} trough 
{\small $\hat{\varphi}(\cdot)$}.
We denote by {\small $\otimes$} the Kronecker product defined in the 
usual way, see e.g.,  \cite{brewer1978kronecker}, and by $\oplus$ the direct sum. 
Notably, the direct sum of a sequence of square matrices 
{\small $[ \Phi_i ]_{i = 1}^{N}$} produces a block diagonal matrix, 
having its elements,  {\small $\Phi_i$}, on the main diagonal blocks. 
Then, {\small $\mathrm{tr}\left(\cdot\right)$} indicates the trace of a square 
matrix. For two Hermitian matrices of the same dimensions, 
{\small $\Phi_{_{\!1}}$} and {\small $\Phi_{_{\!2}}$}, 
{\small $\Phi_{_{\!1}}\!\succeq\!\Phi_{_{\!2}}$} (respectively 
{\small $\Phi_{_{\!1}}\!\succ\!\Phi_{_{\!2}}$}) means that 
{\small $\Phi_{_{\!1}}\!-\!\Phi_{_{\!2}}$} is positive semi-definite
(respectively positive definite). 
Finally, {\small $\mathbb{E}\!\left(\cdot\right)$} stands for the mathematical 
expectation of the underlying scalar valued random variable.\\
\section{NETWORKED CONTROL SYSTEM MODEL}\label{sec:NCSmodel}
Let us consider a closed loop system where the information flow between controller and plant is sent over a (wireless) communication network under a TCP-like protocol.
The {\it plant} is modeled through a linear stochastic system
\beq\label{mathcalG}
\mathcal{G}:\bc x_{k+1}=Ax_k + \nu_k B u_k + Gw_k,\\
y^s_k=Lx_k + Hw_k,\;y^c_k=\gamma_k y^s_k,\\
z_k=Cx_k +\nu_k Du_k,\\
x_0\in\mathbb{F}^{n_x};
\ec
\eeq
\noindent where $x_k\in \FF^{n_x}$ is the system state, $u_k\in\FF^{n_u}$ is the desired control input, computed by the controller and sent to the actuator. The system is affected by intermittent control inputs and observations due to the occurrence of packet losses on the actuation link, and on the sensing link, respectively. Two binary random variables, {\small $\nu_k$} and {\small $\gamma_k$}, depict these features in the model $\mathcal{G}$. Particularly, the stochastic variable $\nu_k$ models the occurrence of packet losses on the actuation link, while the stochastic variable $\gamma_k$ models the occurrence of packet losses on the sensing link. The vector  $y^s_k\in\FF^{n_y}$ contains the measurements that are sent from the sensor to the controller, while $y^c_k\in\FF^{n_y}$ is the vector received by the controller. If the packet containing $y^s_k$ is correctly delivered, then $y^c_k=y^s_k$; otherwise the controller does not receive the packet and we have $y^c_k=0$. 
The vector $z_k\in\FF^{n_z}$ is the output of the system, that is used to define performance index of the optimal controller.
The sequence $\{w_k\in\FF^{n_w};k\in\mathbb{N}\}$ is a white noise sequence, representing discrepancies between the model and the real process, due to unmodeled dynamics or
disturbances and measurement noise. The noise $w_k$ is assumed to be independent from the initial state $x_0$ and from the stochastic variables  $\nu_k$ and $\gamma_k$, respectively. Specifically, we have {\small $\forall k,l \in \mathbb{N}, k \neq l$} that:
\beq\label{noisemeanvari}
\EE[w_k]=0,\quad \EE[w_k w_k^*]=\mathbb{I}_{n_w},\quad\EE[w_k w_l^*]=0.
\eeq
\indent
As in \cite[Section 5.2]{COSTA2005}, without loss of generality we assume that the system matrices are constant matrices of appropriate sizes, such that 
\beq\label{systemmatrices}
GG^*\!\succeq\!0,\,  GH^*\!=\!0,\,
HH^*\!\succ\!0,\, C^*D\!=\!0,\,D^*D\!\succ\!0.
\eeq

\noindent To describe the stochastic characteristics of variables {\small $\nu_k$} and {\small $\gamma_k$} we use the Markov channel model of the packet dropout process proposed in \cite{Lun2020ontheimpact}, where the transition probabilities between the communication channel's states and the associated probabilities of the packet loss are derived analytically by taking into account the geometry of the propagation environment, the degree of motion around the communicating nodes and the relevant physical phenomena involved. In this model the states of the communication channel are measured through the signal-to-noise-plus-interference ratio (SNIR), and each state is associated with a certain packet error probability (PEP). Formally, consider the stochastic basis
{\small $\left(\Omega, \mathcal{F},\{\mathcal{F}_k	\}_{k\in\NN},\mathbb{P}\right)$}, where {\small$\Omega$} is the sample space, {\small$\mathcal{F}$} is the {\small$\sigma$}-algebra of (Borel) measurable events, {\small$\{\mathcal{F}_k	\}_{k\in\NN}$} is the related filtration and {\small$\mathbb{P}$} is the probability measure. The sensing and control channel states are the output of the discrete-time time-homogeneous Markov chains (MCs): \mbox{\small$\mathbb{\eta}:\NN\times \Omega\to \mathbb{S}_{\eta}\subseteq\NN$} and {\small$\mathbb{\theta}:\NN\times \Omega\to \mathbb{S}_{\theta}\subseteq\NN$}. Indeed, {\small$\{\eta_k\}_{k\in\NN}$}
 and {\small$\{\theta_k\}_{k\in\NN}$} take values in the finite sets {\small$\mathbb{S}_{\eta}=\{1,\ldots, \mathbf{I}\}$} and {\small$\mathbb{S}_{\theta}=\{1,\ldots, \mathbf{N}\}$} and have {\it time-invariant} transition probability matrices (hereafter TPMs) $P=[p_{ij}]_{i,j=1}^\mathbf{N}$ and $Q=[q_{mn}]_{m,n=1}^\mathbf{I}$, respectively. The entries of the TPMs $P$ and $Q$ are defined as:
\begin{subequations}
\begin{align}
p_{ij}&\triangleq\mathbb{P}\left(\theta_{k+1}=j|\theta_k =i\right),\;\forall i,j\in \mathbb{S}_{\theta},\label{TPMentriesa}\\
q_{mn}&\triangleq \mathbb{P}\left(\eta_{k+1}=n|\eta_k =m\right),\;\forall m,n\in \mathbb{S}_{\eta},
\label{TPMentriesb}
\end{align}
satisfying:
\beq\label{TPMentriesc}
\sum_{n\in\mathbb{S}_{\eta}} q_{mn}=1,\; \sum_{j\in \mathbb{S}_{\theta}} p_{ij}=1,\;\forall m\in \mathbb{S}_{\eta},\;\forall i\in \mathbb{S}_{\theta}.
\eeq
\end{subequations}
\noindent The variable {\small$\tpi_i(k)$} denotes the probability that the  MC {\small $\mathbb{\theta}$} is in the mode {\small $i\in\mathbb{S}_{\theta}$} at time {\small $k\in\NN$}, i.e. {\small $\tpi_i(k)=\PP\left(\theta_k=i\right)$}, while {\small $\pi_m(k)$} denotes the probability {\small $\PP\left(\eta_k=m\right)$}.
\noindent In order to provide the stability analysis and control synthesis as follows in this paper, we need to consider
the aggregated state {\small $(\nu_k,\theta_k)$, }i.e. a $2\mathbf{N}$-ary random quantity, where {\small$\nu_k$} accounts for the occurrence of packet losses on the actuation channel. Thus, {\small$\nu_k=0$} if the control packet is lost and {\small $\nu_k=1$} if the control packet is correctly delivered. For this reason, we can write that {\small $\nu_k\in\mathbb{S}_{\nu}\triangleq \{0,1\},\;\forall k\in \NN$}. The Markov chain {\small $\mathbb{\theta}$} describes the evolution of the actuation channel. 
Let us define {\small $\nu_{\theta_k}\!\triangleq\!(\nu_k,\theta_k)$}. The probability of having a successful packet delivery on the control channel depends on the current mode of the channel, that is given by {\small $\theta_k=i$}, i.e.,
\begin{align}
\mathbb{P}(\nu_k =1|\theta_k=i)=\hat{\nu}_i,
\;
\quad \mathbb{P}(\nu_k =0|\theta_k=i)=1-\hat{\nu}_i,
\end{align}
\noindent are the probability that the packet is correctly delivered at time {\small $k\in \NN$}, and the probability of having a packet loss  conditioned to {\small $\theta_k=i$}, respectively.
As far as the sensing channel is concerned, its evolution is described by the MC $\eta_k$. The variable $\gamma_k$ accounts for the occurrence of packet losses on the sensing channel. Thus, {\small$\gamma_k=0$} if the 
sensing packet is lost and {\small $\gamma_k=1$} if the  packet is correctly delivered, i.e $\gamma_k\in\mathbb{S}_{\gamma}\triangleq\{0,1\},\;\forall k\in\NN$. 
\noindent  Thus, we need again an aggregated state {\small$\gamma_{\eta_k}\!\triangleq\!(\gamma_k,\eta_k),\;k\in\NN$}, which is a $2\mathbf{I}$-ary random quantity. Similarly, the probability of having a successful packet delivery on the sensing channel also depends on the current mode of the channel, {\small $\eta_k=n$}, i.e.,
\begin{align}
\mathbb{P}(\gamma_k=1|\eta_k=n)=\hat{\gamma}_n,
\;
\quad \mathbb{P}(\gamma_k=0|\eta_k=n)=1-\hat{\gamma}_n,
\end{align}
\noindent are the probability that the packet is correctly delivered at time {\small $k\in \NN$}, and the probability of having a packet loss  conditioned to {\small $\eta_k=n$}, respectively.
\noindent We can write the system presented in \eqref{mathcalG} as:
\begin{small}
\beq
\bc\label{mathcalG1}
x_{k+1}=Ax_k+\nu_{\theta_k}Bu_k+ Gw_k,\\
y_k=\gamma_{\eta_k}Lx_k+\gamma_{\eta_k}Hw_k,\\
z_k=Cx_k+\nu_{\theta_k}Du_k.
\ec
\eeq
\end{small}Similarly to \cite[Section 5.3]{COSTA2005}, we make the following technical assumptions:
\noindent
\begin{enumerate}[(i)]
\item the initial conditions {\small $x_0, \theta_0, \eta_0$} are independent random
variables,
\item the white  noise {\small $w_k$} is independent from the initial conditions {\small $(x_0,\nu_{\theta_0},\gamma_{\eta_0})$} and from the Markov processes {\small $\nu_{\theta_k},	\;\gamma_{\eta_k}$}, for all values of the discrete time {\small $k$},
\item the sequence {\small $\{w_k;\; k\in \NN \}$} and the
Markov chains {\small$\{\theta_k;\;k\in\NN \}$}, {\small$\{\eta_k;\;k\in\NN \}$} are independent sequences,
\item the MCs {\small $\{\theta_k\}_{k\in\NN}$ and $\{\eta_k\}_{k\in\NN}$} are {\it ergodic}, with {\it steady state probability distributions}
\begin{small} 
\beq
\tpi^{\infty}_i=\lim_{k\to\infty}\tpi_i(k),
\quad
\pi^{\infty}_m=\lim_{k\to\infty}\pi_m(k),
\eeq
\end{small}
respectively. Consequently, the Markov processes $\nu_{\theta_k}$
and $\gamma_{\eta_k}$ are also {\it ergodic}.
\end{enumerate}
\noindent In \cite{Lun2019stabilizability}, the control input {\small $u_{k}$} is designed exploiting the available information regarding the actuation channel state, that is {\small $\theta_{k-1}$}, affected by one time-step delay. For this reason, we will consider, as in \cite{Lun2019stabilizability}, the aggregated Markov state {\small $(\theta_k,\theta_{k-1})$}. The introduced memory is, however, fictitious, since the aggregated MC obeys to the Markov property of the memoryless chain {\small $\mathbb{\theta}$}. Moreover, we are able to compute the probabilities related to the joint process {\small $\left(\nu_k,\theta_k,\theta_{k-1}\right)$}, as in \cite{Lun2019stabilizability}.
As far as the joint process {\small $\gamma_{\eta_k}=\left(\gamma_k,\eta_k\right)$} is concerned,  applying {\it Bayes Law, the Makov poperty, and the independence between {\small $\gamma_{k}$} and {\small $\eta_k$}}, we obtain:
\begin{subequations}
\beq
\mathbb{P}\left(\gamma_{k+1}=1,\eta_{k+1}=n|\eta_k=m \right)=\hat{\gamma}_n q_{mn},
\eeq
\begin{align}
& \mathbb{P}\left(\gamma_{k+1}=0,\eta_{k+1}=n|\eta_k=m \right)=\left(1-\hat{\gamma}_n\right) q_{mn},\\
& \forall m,n \in \mathbb{S}_{\eta}.\nonumber
\end{align}
\end{subequations}
\noindent In order to apply the usual definition of the mean square stability
\cite[pp. 36--37]{COSTA2005} to system \eqref{mathcalG1}, we consider the
operational modes of system \eqref{mathcalG1}, given by 
{\small $\left(\nu_{\theta_k},\theta_{k-1},\gamma_{\eta_k} \right)$}, which is a
{\small $4\mathbf{N}^2\mathbf{I}$}-ary random quantity.
\begin{mydef}\label{def:mss}
A MJLS \eqref{mathcalG1} is \textit{mean square stable} if  there exist equilibrium points
{\small $\mu_e$} and {\small $Q_e$} (independent from initial conditions)
 such that, for any 
initial condition {\small $\left(x_0,\nu_{\theta_0},\gamma_{\eta_0}\right)$}, the following equalities hold: 
\vspace*{-2mm}
\begin{small}
\begin{equation}\label{eq:mss_general}
\lim_{k \to \infty}\left\|\mathbb{E}\!\left(
	x_k \right)\!-\!\mu_e\right\|\!=\!0, \quad
\lim_{k \to \infty}\left\|\mathbb{E}\!\left(
	x_k x_k^* \right)\!-\!Q_{e} \right\| \!=\! 0.
\end{equation}
\end{small}
\end{mydef}
\begin{figure}[thpb]
      \centering
      \includegraphics[scale=0.35]{Figure1.png}
      \caption{Information flow timing between the plant and the controller.}
      \label{fig:1}
   \end{figure}
Fig. \ref{fig:1} illustrates the information flow of actuation and sensing data between the plant and the controller under TCP-like protocols, with a sampling period {\small $T$}. At time {\small $(k-1)T+\delta_3+\Delta+\delta_1$}, the information set \eqref{infoset1} is available to the controller, for the computation of the control input {\small $u_k$} that will be applied at time indexed by {\small $k$}:
\beq\label{infoset1}
\mathcal{F}^{k}=\{(u_t)_{t=0}^{k-1}, (y_t)_{t=0}^{k-1}, (\nu_{\theta_t})_{t=0}^{k-1}, (\gamma_{\eta_t})_{t=0}^{k-1} \}.
\eeq 
\noindent We aim to design a {\it dynamical linear output feedback controller} having the following Markov jump structure: 
\beq\label{K1}
\mathcal{G}_K:\bc
\hx_{k+1}=\widehat{A}(\nu_{\theta_k},\theta_{k-1},\gamma_{\eta_k}) \hx_{k} +\widehat{B}_{\eta_{k}}y_{k},\\
u_{k}= \widehat{C}_{\theta_{k-1}}\hx_{k}. \ec
\eeq
The control problem consists in finding the optimal matrices {\small $\widehat{A}(\nu_{\theta_k},\theta_{k-1},\gamma_{\eta_k}),\widehat{B}_{\eta_{k}},\widehat{C}_{\theta_{k-1}},$} such that the closed-loop system is {\it mean square stable}, according to Definition \ref{def:mss}.
The matrices {\small $\widehat{C}\!=\![\widehat{C}_{l}]_{l=1}^{\mathbf{N}}$}, and {\small $\widehat{B}\!=\![\widehat{B}_{n}]_{n=1}^{\mathbf{I}}$} are the solutions of the optimal control problem and of the optimal filtering problem, respectively.
\section{THE OPTIMAL LINEAR QUADRATIC REGULATOR}\label{sec:LQR}
In this section, we need to exploit the  definition given in \cite[{\it Definition 1}]{Lun2019stabilizability}, dealing with {\it mean square stabilizability} of Markov jump linear systems with one time-step delayed operational mode observations. 
\begin{mydef}[{\it Mean square stabilizability with delay}]\label{Def:MSstab}
The system described by \eqref{mathcalG1} is {\it mean square stabilizable with one time step delay operational mode observation} if for any initial condition $(x_0,\theta_0)$, there exists a {\it mode-dependent gain} $F=[F_l]_{l=1}^\mathbf{N}$, such that  $u_k=F_{\theta_{k\!-\!1}}x_k$ is {\it the mean square stabilizing state feedback} for \eqref{mathcalG1}.
\end{mydef}
\subsection{The Control CARE}\label{subsec:CCARE}
\noindent In this subsection, we compute the {\it optimal mode-dependent control gain with one time-step delayed operational mode observation in the actuation channel}, denoted by {\small $F=[F_l]_{l=1}^\mathbf{N}$}. 
We recall the {\it infinite horizon optimal control problem}, whose solution is given in \cite{Lun2019stabilizability} starting from the more general result presented in \cite{BARAS2008}. We set for any $X\in\HH^{\mathbf{N}n_x,+}$:
\begin{small}
\begin{align*}
\mathcal{A}_l\!\triangleq\!
A^*\!\left(\!\sum_{i=1}^\mathbf{N} p_{li}X_{i}\!\right)\!A\!+\!C^*C,\qquad \mathcal{C}_l\triangleq
A^*\!\left(\!\sum_{i=1}^\mathbf{N} p_{li}\hat{\nu}_i X_{i}\!\right)\!B,\\
\tilde{\mathcal{B}}_l\!\triangleq
\!\sum_{i=1}^\mathbf{N} p_{li}\hat{\nu}_i \!\left(B^* X_{i} B\!+\!D^* D\!\right)\!,\;\; \mathcal{X}_l\!\triangleq\!\mathcal{A}_l\!-\!\mathcal{C}_l\tilde{\mathcal{B}}_l^{-1}\mathcal{C}_l^*,\, l\in\mathbb{S}_{\theta}.
\end{align*}
\end{small}We call the set of equations {\small$X_l\!=\!\mathcal{X}_l(X)$} {\it Control Coupled Algebraic Riccati Equation (hereafter Control CARE)}. Clearly, the necessary condition for the existence of the {\it mean square stabilizing} solution {\small$\tilde{X}\!\in\!\HH^{\mathbf{N}n_x,+}$}, of the Control CARE, is the {\it mean square stabilizability with one time-step delay} of system \eqref{mathcalG1}, according to {\it Definition} \ref{Def:MSstab}. If {\small $\tilde{X}\!\in\!\HH^{\mathbf{N}n_x,+}$} is the {\it mean square stabilizing} solution of the {\it Control} CARE, then the state feedback control input {\small $F_{\theta_{k-1}}x_k$} stabilizes the system in the mean square sense, with one time-step delay in the observation of the actuation channel mode. The solution of the optimal control problem can be computed using the LMI approach presented in \cite{BARAS2008}.
The optimized performance index is given by
\begin{small}
\beq 
J_c=\limsup\limits_{t\to\infty}\frac{1}{t}\EE\left[\sum_{k=0}^t\!\left(z_kz^*_k\right) |\mathcal{F}^k\right],
\eeq 
\end{small}while the performance index achieved by the {\it optimal control law} is 
\begin{small}
\beqs
J_c^*\!=\!{\displaystyle\sum_{i=1}^\mathbf{N}} \tilde{\pi}^{\infty}_i\mathrm{tr}\left(G^*X_{i} G\right).
\eeqs
\end{small}
\section{THE LUENBERGER LIKE OBSERVER}\label{sec:Luobs}
\noindent In this section, we present the filtering problem. To find the output-feedback controller using the information set described in Section \ref{sec:NCSmodel}, we design a {\it Luenberger like observer}, given by:
\begin{small}
\beq\label{tildemathcalG}
\tilde{\mathcal{G}}:\bc\tx_{k+1}\!=\!A\tx_{k}\!+\!\nu_{\theta_{k}}Bu_{k}\!-\!M_{\eta_{k}}(y_{k}\!-\!\gamma_{\eta_k}L\tx_{k}),\\
u_k\!=\!F_{\theta_{k-1}}\tx_k,\\
\tx(0)\!=\! \tx_0;\ec
\eeq
\end{small}where {\small $[ M_m]_{m=1}^{\mathbf{I}}\!\triangleq\!M$} is the {\it mode-dependent filtering gain} to be found as a solution of the filtering problem. It may be noted that when we compute {\small $\tx_{k+1}$} we know whether the control packet and the measurement packet arrived or not at the previous step. Indeed, this information will be contained by {\small $\mathcal{F}^{k+1}$} that will be used to compute the control input to apply at time {\small $k\!+\!1$}, that is {\small $u_{k+1}\!=\!F_{\theta_k}\tx_{k+1}$}.
Let us define the {\it estimate error} as {\small $
\te_{k}\!\triangleq\! x_{k}\!-\!\tx_{k}$}. Consequently, the {\it error dynamics is obtained as follows}:
\begin{align}\label{nexterrordynamics}
\te_{k+1}=\left(A\!+\!\gamma_{\eta_k}M_{\eta_k}L\right)\te_k\!+\!\left(G+\gamma_{\eta_k}M_{\eta_k}H\right)w_k.
\end{align}
\begin{rem}\label{rem:independent gains}
The error dynamics does not depend on the control input. Thus, the gain matrices {\small $F_{\theta_{k-1}}$} and {\small $M_{\eta_k}$} can be computed independently.
\end{rem}
\subsection{Observer stability analysis}
\noindent In this subsection, we provide a stability analysis for the  error system.
We want to find {\it recursive difference equations }for the first moment and the second moment error, {\small $\te_k$}. Specifically, we define:
\begin{subequations}
\beq\label{nextmnkdef}
\tm_n(k)\!\triangleq\!\EE\!\left[\te_k\ind_{\{\eta_{k\!-\!1}\!=n\}}\!\right],\;\tm(k)\!\triangleq\!\bmat \tm_n(k)\!\emat_{n=1}^\mathbf{I}\!\in\!\FF^{\mathbf{I}n_x},
\eeq 
\beq\label{nextYnkdef}
\tY_n(k)\!\triangleq\!\EE\!\left[\te_k\te_k^*\ind_{\{\eta_{k\!-\!1}\!=n\}}\!\right],\; \tY(k)\!\triangleq\!\bmat\tY_n(k)\!\emat_{n=1}^\mathbf{I}\!\in\!\FF^{\mathbf{I}n_x\!\times\!n_x}.
\eeq
\end{subequations}
\noindent So that the {\it first and  second moment} of {\small $\te_k$} are given by:
\beq\label{nextEe}
\EE\left[\te_k\right]=\sum_{n=1}^{\mathbf{I}}\tm_n(k),\quad\EE\left[\te_k\te_k^*\right]=\sum_{n=1}^{\mathbf{I}}\tY_n(k).
\eeq
\noindent In order to carry on the mean square stability analysis, in the spirit of \cite{COSTA2005}, we need to define the operators {\small $
\mathcal{V}(\cdot)\!\triangleq\! [\mathcal{V}_m(\cdot)]_{m=1}^\mathbf{I}$, $
\tilde{\mathcal{V}}(\cdot)\!\triangleq\![ \tilde{\mathcal{V}}_m(\cdot)]_{m=1}^\mathbf{I}$}, and {\small $\mathcal{J}(\cdot)\!\triangleq\![\mathcal{J}_m(\cdot)]_{m=1}^\mathbf{I}$}, all in {\small $\BB\left(\FF^{\mathbf{I}n_x\times n_x}\right)$}, as follows. For all {\small $\mathbf{S}=[ S_m]_{m=1}^{\mathbf{I}}$}, {\small $\mathbf{T}\!=\![ T_m]_{m=1}^{\mathbf{I}}$}, both in {\small $\FF^{\mathbf{I}n_x\!\times \!n_x}$}, we specify the inner product as:
\beq\label{intprod}
\left\langle \mathbf{S};\mathbf{T}\right\rangle\!\triangleq\!\sum_{m=1}^\mathbf{I}\! \rtr \left(S_m^*T_m\right),
\eeq
\noindent while the components of operators $\mathcal{V}$, $\mathcal{J}$, and $\tilde{\mathcal{V}}$ are defined for $n\!\in\!\mathbb{S}_{\eta}$ by: 
\begin{subequations}
\beq\label{mathcalD}
\mathcal{D}_n(\mathbf{S})\triangleq \sum\limits_{m=1}^{\mathbf{I}}q_{mn} S_m,
\eeq
\beq\label{mathcalV}
\mathcal{V}_n(\mathbf{S})\!\triangleq\!\hat{\gamma}_n \Gamma_{n1}\mathcal{D}_n(\mathbf{S})\Gamma_{n1}^*\!+\!(1\!-\!\hat{\gamma}_n)\Gamma_{n0}\mathcal{D}_n(\mathbf{S})\Gamma_{n0}^*,
\eeq
\beq\label{mathcalJ}
\mathcal{J}_m(\mathbf{S})\!\triangleq\! {\displaystyle \sum_{n=1}^{\mathbf{I}}}\!q_{mn}\hat{\gamma}_n\Gamma_{n1}^*S_n\Gamma_{n1}\!\!
+\!\!{\displaystyle\sum_{n=1}^\mathbf{I}}\!q_{mn}(1\!-\!\hat{\gamma}_n)\Gamma_{n0}^*S_n\Gamma_{n0},
\eeq
\beq\label{barmathcalV}\tilde{\mathcal{V}}_n(\mathbf{S})\!=\! \hat{\gamma}_n \Lambda_{n1}\mathcal{D}_n(\mathbf{S})\Lambda_{n1}^*\!+\!(1\!-\!\hat{\gamma}_n)\Lambda_{n0}\mathcal{D}_n(\mathbf{S})\Lambda_{n0}^*,
\eeq
\end{subequations}where the matrices {\small $\Gamma_{n1},\,\Gamma_{n0},\,\Lambda_{n1},\,\Lambda_{n0}\!\in\!\FF^{n_x\times n_x}$}  will be defined later in the paper.
\begin{rem}\label{remrhov}
Clearly, we have that {\small $\left(\mathcal{V}(\mathbf{S})\right)^*\!\!=\!\!\mathcal{V}(\mathbf{S}^*)$}, and it is immediate to verify (starting from \eqref{intprod}, applying \eqref{mathcalV}, \eqref{mathcalJ}, linearity of the trace operator and its invariance under cyclic permutations) that {\small $\mathcal{J}$} is the {\it adjoint operator } of {\small $\mathcal{V}$}, i.e. {\small $\mathcal{V}^*\!=\!\mathcal{J}$}. This is a specialization of \cite[Prop. 3.2, p. 33]{COSTA2005}. Furthermore, it is evident from their definitions \eqref{mathcalV}, and \eqref{mathcalJ}, that {\small $\mathcal{V}$ and $\mathcal{J}$} are Hermitian and positive operators.
\end{rem}
\begin{prop}\label{propmeanerror}
Consider the {\it error system} \eqref{nexterrordynamics}. Then, the following equalities hold:
\beq\label{nexttmkexp}
\tm(k\!+\!1)=\mathcal{B}\tm(k),\ \tY(k\!+\!1)=\mathcal{V}(\tY(k))+\mathcal{O}(M,k),
\eeq
\begin{subequations}
\beq\label{mathcalB}
\mathcal{B} \triangleq \left(\left({\displaystyle\oplus_{n=1}^{\mathbf{I}}}\left(\hat{\gamma}_n\Gamma_{n1}\!\right)\right)+\left({\displaystyle\oplus_{n=1}^{\mathbf{I}}}\left((1-\hat{\gamma}_n)\Gamma_{n0}\right)\!\right)\right)\left(Q'\otimes\mathbb{I}_{n_x}\right).
\eeq
\begin{align}
\mathcal{O}_n(M,k)&\triangleq\pi_n(k)\left(GG^*+\hat{\gamma}_nM_nHH^*M_n^*\right),\\
\mathcal{O}(M,k)&\triangleq\left[\mathcal{O}_n(M,k)\right]_{n=1}^{\mathbf{I}}.
\end{align}
\end{subequations}
\end{prop}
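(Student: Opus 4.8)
The plan is to derive both recursions in \eqref{nexttmkexp} directly from the error dynamics \eqref{nexterrordynamics} by conditioning on the current sensing mode $\{\eta_k=n\}$ and then on the packet-arrival outcome $\gamma_k\in\{0,1\}$. The first thing I would do is read off the realized coefficient matrices: on the event $\{\eta_k=n\}$ the recursion \eqref{nexterrordynamics} becomes $\te_{k+1}=(A+\gamma_k M_n L)\te_k+(G+\gamma_k M_n H)w_k$, which fixes the matrices left unspecified in the operator definitions as $\Gamma_{n1}=A+M_nL$, $\Gamma_{n0}=A$ and $\Lambda_{n1}=G+M_nH$, $\Lambda_{n0}=G$.

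For the first moment I start from $\tm_n(k+1)=\EE[\te_{k+1}\ind_{\{\eta_k=n\}}]$. Substituting the dynamics, the noise contribution $\EE[(G+\gamma_k M_n H)w_k\ind_{\{\eta_k=n\}}]$ vanishes because, by assumptions (ii)--(iii), $w_k$ is zero-mean and independent of $(\te_k,\gamma_k,\eta_k)$, leaving $\EE[(A+\gamma_k M_n L)\te_k\ind_{\{\eta_k=n\}}]$, which I split according to $\gamma_k$. The decisive step is the conditioning: since $\te_k$ is a function of the randomness strictly before time $k$, conditioning on $\eta_{k-1}=m$ makes the transition $\eta_{k-1}\to\eta_k=n$ (probability $q_{mn}$) and the Bernoulli outcome $\gamma_k$ given $\eta_k=n$ (probability $\hat\gamma_n$) independent of $\te_k\ind_{\{\eta_{k-1}=m\}}$, so that $\EE[\te_k\ind_{\{\eta_k=n,\gamma_k=1\}}]=\hat\gamma_n\sum_{m=1}^{\mathbf{I}} q_{mn}\tm_m(k)=\hat\gamma_n\mathcal{D}_n(\tm(k))$, and analogously with factor $(1-\hat\gamma_n)$ when $\gamma_k=0$. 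Collecting terms gives $\tm_n(k+1)=\hat\gamma_n\Gamma_{n1}\mathcal{D}_n(\tm(k))+(1-\hat\gamma_n)\Gamma_{n0}\mathcal{D}_n(\tm(k))$. I then recognize that stacking $[\mathcal{D}_n(\tm(k))]_{n=1}^{\mathbf{I}}$ is exactly the action of $(Q'\otimes\mathbb{I}_{n_x})$ on $\tm(k)$, because $(Q')_{nm}=q_{mn}$, while left-multiplication by the mode-dependent matrices is the block-diagonal operator $\oplus_n(\hat\gamma_n\Gamma_{n1})+\oplus_n((1-\hat\gamma_n)\Gamma_{n0})$; their composition is precisely $\mathcal{B}$ in \eqref{mathcalB}, yielding $\tm(k+1)=\mathcal{B}\tm(k)$.

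For the second moment I expand $\te_{k+1}\te_{k+1}^*$ on $\{\eta_k=n\}$ into four terms. The two mixed $\te_k w_k^*$ terms vanish by the same zero-mean independence of $w_k$. The homogeneous term $\EE[(A+\gamma_k M_n L)\te_k\te_k^*(A+\gamma_k M_n L)^*\ind_{\{\eta_k=n\}}]$ is treated by the identical $\gamma_k$-split and conditioning argument, now applied to $\te_k\te_k^*$, producing $\hat\gamma_n\Gamma_{n1}\mathcal{D}_n(\tY(k))\Gamma_{n1}^*+(1-\hat\gamma_n)\Gamma_{n0}\mathcal{D}_n(\tY(k))\Gamma_{n0}^*=\mathcal{V}_n(\tY(k))$, matching \eqref{mathcalV}. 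For the pure-noise term I use $\EE[w_kw_k^*]=\mathbb{I}_{n_w}$ together with $\PP(\eta_k=n,\gamma_k=1)=\hat\gamma_n\pi_n(k)$ to get $\hat\gamma_n\pi_n(k)(G+M_nH)(G+M_nH)^*+(1-\hat\gamma_n)\pi_n(k)GG^*$; the structural assumption $GH^*=0$ from \eqref{systemmatrices} kills the cross terms, so $(G+M_nH)(G+M_nH)^*=GG^*+M_nHH^*M_n^*$ and the expression collapses to $\pi_n(k)(GG^*+\hat\gamma_nM_nHH^*M_n^*)=\mathcal{O}_n(M,k)$. Adding the two contributions gives $\tY_n(k+1)=\mathcal{V}_n(\tY(k))+\mathcal{O}_n(M,k)$, the second identity in \eqref{nexttmkexp}.

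I expect the genuine obstacle to be the measurability and conditional-independence step: making rigorous that $\te_k$ (and $\te_k\te_k^*$) depends only on $(w_0,\dots,w_{k-1})$ and on the chains up to time $k-1$, so that the joint factor $\hat\gamma_n q_{mn}$ detaches cleanly from the expectation. This is exactly where assumptions (i)--(iii) on the filtration and the independence of $w_k$ from the Markov chains are invoked, and it is the only point in the argument that goes beyond routine linear algebra; everything else is bookkeeping that reassembles the sums into the operators $\mathcal{D}_n$, $\mathcal{V}_n$, $\mathcal{O}_n$ and the Kronecker form $\mathcal{B}$.
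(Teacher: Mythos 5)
Your proposal is correct and follows essentially the same route as the paper's own proof, which is only a brief sketch invoking the definitions \eqref{nextmnkdef}--\eqref{nextYnkdef}, the error dynamics \eqref{nexterrordynamics}, the zero-mean noise assumption, and $GH^{*}=0$. Your write-up simply makes explicit the mode-conditioning and Markov-property bookkeeping (the factorization $\hat{\gamma}_n q_{mn}$, the split over $\gamma_k\in\{0,1\}$, and the Kronecker structure of $\mathcal{B}$) that the paper leaves to the reader.
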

\begin{proof}
See Appendix.
\end{proof}
\noindent Define {\small $\mathcal{C}\!\triangleq\!Q'\!\otimes\!\mathbb{I}_{n_x^2}.$ }
Then, the matrix forms of \eqref{mathcalV} and \eqref{mathcalJ} can be written respectively as
\beq
\hat{\varphi}\left(\mathcal{V}(\mathbf{S})\right)=\tilde{\mathbf{\Lambda}}\hat{\varphi}\left((\mathbf{S})\right),\quad \hat{\varphi}\left(\mathcal{J}(\mathbf{S})\right)=\tilde{\mathbf{\Lambda}}^*\hat{\varphi}\left(\mathbf{S}\right),
\eeq
\noindent where
\begin{align*}
\tilde{\mathbf{\Lambda}}\triangleq&\Big[{\displaystyle\oplus_{m=1}^{\mathbf{I}}} \Big(\hat{\gamma}_m\left(\bar{\Gamma}_{m1}\otimes \Gamma_{m1}\right)\Big)+\nonumber\\
&{\displaystyle\oplus_{m=1}^{\mathbf{I}}} \Big((1-\hat{\gamma}_m)\left(\bar{\Gamma}_{m0}\otimes \Gamma_{m0}\Big)\right)\Big]\mathcal{C}.
\end{align*}
\noindent For all {\small $ \mathbf{S}\!=\!\bmat S_m\emat_{m=1}^{\mathbf{I}}\!\in\!\FF^{\mathbf{I}n_x\!\times \!n_x}$}, from \eqref{mathcalV} and \eqref{mathcalJ}, together with {\it Remark} \ref{remrhov}, it follows that {\small $\hat{\varphi}\!\left(\mathcal{V}\left(\mathbf{S}\right)\!\right)\!=\!\tilde{\mathbf{\Lambda}}\hat{\varphi}\left(\mathbf{S}\right)$,  $\hat{\varphi}\!\left(\mathcal{J}\left(\mathbf{S}\right)\!\right)\!=\!\tilde{\mathbf{\Lambda}}^*\!\hat{\varphi}\!\left(\mathbf{S}\right)$}. Thus, we have that {\small $\rho(\mathcal{V})\!=\!\rho(\!\mathcal{J}\!)\!=\!\rho(\!\tilde{\mathbf{\Lambda}}\!)$}.
In the following, we introduce the definition of mean square  detectability with respect to the sensing channel.
\begin{mydef}\label{msddef}
The system described by \eqref{mathcalG1} is {\it mean square detectable with respect to the sensing channel} if there exists a mode-dependent filtering gain {\small $M\!=\![M_n]_{n=1}^{\mathbf{I}}$},  such that \mbox{\small $\rho(\mathcal{V})\!<\!1$}, with {\small $\mathcal{V}$} defined as in \eqref{mathcalV}, and with \mbox{\small $\Gamma_{n1}\!=\!A\!+\!M_nL,$} \mbox{\small $\Gamma_{n0}\!=\!A$}.
\end{mydef}
\begin{rem}\label{remrhomss}
Applying the results presented in \cite[\it Section 3.4.2]{COSTA2005}, to the operator {\small $\mathcal{V}$} (with {\small $\mathcal{V}$} defined by \eqref{mathcalV}, and with {\small $\Gamma_{n1}\!=\!A\!+\!M_nL,\;\Gamma_{n0}\!=\!A$}), it follows that {\small $\rho(\mathcal{V})\!\!<\!\!1$} implies mean square stability of the error system \eqref{nexterrordynamics}.
\end{rem}
\subsection{The Filtering CARE}
\noindent In this subsection, 
we compute {\it the optimal mode-dependent filtering gain}. The performance index optimized by this filter is :
\beq
J_f=\limsup_{t\to\infty} \frac{1}{t}\EE\left[\sum_{k=0}^t\left(\te_k\te^*_k\right)|\mathcal{F}^k\right].
\eeq
\noindent By using a technical approach based on dynamic programming, it is immediate to see that the solution of the optimal infinite horizon filtering problem can be obtained from the following CARE.
We set for any {\small $Y\in\HH^{\mathbf{I}n_x,*}$}
\begin{align*}
\tilde{\mathcal{A}}_n\left(Y\right)&\triangleq A\mathcal{D}_n(Y)A^* + \pi_n^{\infty}GG^{*},\ 
\tilde{\mathcal{C}}_n\left(Y\right)\triangleq\!\hat{\gamma}_n^{\frac{1}{2}}\!A \mathcal{D}_n(Y) L^*,\\
\tilde{\mathcal{R}}_n\left(Y\right)&\triangleq \pi_n^{\infty}HH^* + L \mathcal{D}_n(Y)L^*.
\end{align*}
\noindent Given the set 
\beqs
\LL\!\triangleq\!\{\!Y\!\in\!\HH^{\mathbf{I}n_x,*};\ \tilde{\mathcal{R}}_n(Y) 	\text{ non-singular }
\forall n \in\!\mathbb{S}_{\eta}\},
\eeqs
\noindent we define for any {\small $Y\!\in\!\LL$} 
\begin{align}
\mathcal{Y}_n(Y)&\triangleq \tilde{\mathcal{A}}_n(Y) - \tilde{\mathcal{C}}_n(Y)\tilde{\mathcal{R}}_n(Y)^{- 1}\tilde{\mathcal{C}}_n^*(Y),\;n\in\mathbb{S}_{\eta};\\
\mathcal{Y}(\cdot)&\triangleq [\mathcal{Y}_n(\cdot)]_{n=1}^{\mathbf{I}}.\nonumber
\end{align}
We call {\it Filtering CARE} the set of equations
\beq\label{FiltCARE0}
Y_n = \mathcal{Y}_n(Y),\,n \in \mathbb{S}_{\eta}.
\eeq
\indent
In the following, we show that the optimal solution of the Filtering CARE \eqref{FiltCARE0} can be obtained through a linear matrix inequality (LMI) approach.
\begin{pb}\label{mypb}
Consider the following optimization problem
\begin{subequations}\label{optpbriccati}
\begin{align}
\max\,\mathrm{tr}\left(\sum\limits_{n=1}^{\mathbf{I}} Y_n\right)
\end{align}
\text{subject to}
\begin{align}
\bmat 
-Y_n+\tilde{\mathcal{A}}_n(Y) && \tilde{\mathcal{C}}_n(Y)\\
\tilde{\mathcal{C}}^*_n(Y) &&  \tilde{\mathcal{R}}_n(Y)
\emat \succeq 0,
\end{align}
\beq
\tilde{\mathcal{R}}_n(Y)\!\succ\!0 ,\;Y\!\in\!\HH^{\mathbf{I}n_x,*},\; n\!\in\!\mathbb{S}_{\eta}.
\eeq
\end{subequations}
\end{pb}
Given the set  \vspace*{-2mm}
\beqs
\mathbb{M} \triangleq \{ Y \in\LL;\ \tilde{\mathcal{R}}(Y) \succ 0 \ \text{and } - Y + \mathcal{Y}(Y)\succeq 0\}
\eeqs
\noindent
we present the following theorem.
\begin{thm}[{\it Solution of Problem  \ref{mypb}}]\label{thmmaximal}
Assume that  \eqref{mathcalG1} is mean square detectable according to {\it Definition} \ref{msddef}. Then, there exists \mbox{\small $Y^+\!\in \!\MM$}, satisfying \eqref{FiltCARE0}, such that \mbox{\small $Y^+ \succeq Y$}, for all \mbox{\small $Y\in \MM$}, if and only if there exists a solution \mbox{\small $\widehat{Y}$} for the above convex programming problem. Moreover, \mbox{\small$\hY = Y^+$}.
\end{thm}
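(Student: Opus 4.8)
The plan is to reduce Problem~\ref{mypb} to a statement about the partial order $\succeq$ on $\HH^{\mathbf{I}n_x,*}$ and then to match the maximizer of the convex program with the maximal solution of the Filtering CARE. First I would rewrite the feasible set: because $\tilde{\mathcal{R}}_n(Y)\succ0$ is enforced, the Schur complement applied to the block LMI shows that the constraint is equivalent to $-Y_n+\tilde{\mathcal{A}}_n(Y)-\tilde{\mathcal{C}}_n(Y)\tilde{\mathcal{R}}_n(Y)^{-1}\tilde{\mathcal{C}}_n^*(Y)=-Y_n+\mathcal{Y}_n(Y)\succeq0$, so the feasible set is exactly $\MM$ and the program maximizes $\rtr(\sum_n Y_n)$ over $\MM$. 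Next I would record a completion-of-squares representation: for every gain $M=[M_n]_{n=1}^{\mathbf{I}}$,
\[
\mathcal{L}_n^{M}(Y)\triangleq\hat{\gamma}_n(A+M_nL)\mathcal{D}_n(Y)(A+M_nL)^*+(1-\hat{\gamma}_n)A\mathcal{D}_n(Y)A^*+\pi_n^{\infty}\big(GG^*+\hat{\gamma}_nM_nHH^*M_n^*\big)
\]
satisfies $\mathcal{L}_n^{M}(Y)\succeq\mathcal{Y}_n(Y)$, with equality at $M_n^\star(Y)=-\hat{\gamma}_n^{-1/2}\tilde{\mathcal{C}}_n(Y)\tilde{\mathcal{R}}_n(Y)^{-1}$, the identity being obtained by completing the square in $M_n$ using $\tilde{\mathcal{R}}_n(Y)\succ0$. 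Two consequences are used repeatedly: (a) $\mathcal{Y}$ is \emph{monotone}, i.e. $Y\succeq Y'$ implies $\mathcal{Y}(Y)\succeq\mathcal{Y}(Y')$, since each $\mathcal{L}^{M}$ is affine and order-preserving in $Y$ (as $\mathcal{D}_n$ preserves $\succeq$) and $\mathcal{Y}$ is their pointwise minimum; (b) $\mathcal{L}^{M}$ is affine, with linear part the operator $\mathcal{V}$ of \eqref{mathcalV} evaluated at $\Gamma_{n1}=A+M_nL$, $\Gamma_{n0}=A$. I also use the elementary fact that, for Hermitian matrices, $S\succeq T$ with $S\neq T$ forces $\rtr(S)>\rtr(T)$, because a nonzero positive semi-definite matrix has strictly positive trace.

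The easy implication comes first. If the maximal solution $Y^+\in\MM$ exists, it is feasible and, by $Y^+\succeq Y$ for all $Y\in\MM$ together with the strict trace inequality, it is the \emph{unique} maximizer; hence $\hY:=Y^+$ solves the program and coincides with $Y^+$. For the converse, suppose $\hY$ solves Problem~\ref{mypb}, so $\hY\in\MM$ attains the maximal trace. I would first show that $\hY$ satisfies the CARE with equality: if not, set $\bar Y:=\mathcal{Y}(\hY)$, so that $\bar Y\succeq\hY$ and $\bar Y\neq\hY$. Monotonicity gives $\mathcal{Y}(\bar Y)\succeq\mathcal{Y}(\hY)=\bar Y$, while $\tilde{\mathcal{R}}_n(\bar Y)\succeq\tilde{\mathcal{R}}_n(\hY)\succ0$, so $\bar Y\in\MM$; but then $\rtr(\sum_n\bar Y_n)>\rtr(\sum_n\hY_n)$ contradicts optimality. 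Hence $\hY=\mathcal{Y}(\hY)$.

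It remains to prove maximality of $\hY$. Let $\widehat M:=M^\star(\hY)$ be the optimal gain, so $\hY=\mathcal{L}^{\widehat M}(\hY)=\mathcal{V}^{\widehat M}(\hY)+c$, where $c_n=\pi_n^{\infty}(GG^*+\hat{\gamma}_n\widehat M_nHH^*\widehat M_n^*)\succeq0$ and $\mathcal{V}^{\widehat M}$ denotes $\mathcal{V}$ with $\Gamma_{n1}=A+\widehat M_nL$, $\Gamma_{n0}=A$. For an arbitrary $Y\in\MM$, property (b) yields $Y\preceq\mathcal{Y}(Y)\preceq\mathcal{L}^{\widehat M}(Y)=\mathcal{V}^{\widehat M}(Y)+c$; subtracting, $\Delta:=\hY-Y$ obeys $\Delta\succeq\mathcal{V}^{\widehat M}(\Delta)$. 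Provided $\rho(\mathcal{V}^{\widehat M})<1$, the operator $(\mathcal{I}-\mathcal{V}^{\widehat M})^{-1}=\sum_{j\ge0}(\mathcal{V}^{\widehat M})^{j}$ is positive (each $\mathcal{V}^{\widehat M}$ is positive by Remark~\ref{remrhov}), so $\Delta=(\mathcal{I}-\mathcal{V}^{\widehat M})^{-1}(\Delta-\mathcal{V}^{\widehat M}(\Delta))\succeq0$, i.e. $\hY\succeq Y$. Thus $\hY$ is the maximal element $Y^+$, and since maximal solutions are unique, $\hY=Y^+$.

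The hard part will be the maximality step just sketched, namely showing that the gain $\widehat M$ attached to the optimal $\hY$ is \emph{mean square stabilizing}, $\rho(\mathcal{V}^{\widehat M})<1$, rather than merely $\le1$; only this guarantees the positivity of $(\mathcal{I}-\mathcal{V}^{\widehat M})^{-1}$ on the cone $\HH^{\mathbf{I}n_x,+}$ that closes the comparison argument. This is exactly where mean square detectability (Definition~\ref{msddef}) enters: it supplies a stabilizing gain, and I would combine it with the maximality and uniqueness theory for coupled algebraic Riccati equations in \cite[Section~3.4 and Appendix~A]{COSTA2005} to conclude that the trace-maximizing CARE solution is the unique stabilizing one. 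Concretely, I expect to compare $\hY$ with the Lyapunov solution generated by a detectability gain and to invoke once more the positivity of $(\mathcal{I}-\mathcal{V})^{-1}$; adapting that argument to the present operator $\mathcal{V}$, which carries both the sensing-channel modes and the packet-loss probabilities $\hat{\gamma}_n$, is the delicate point.
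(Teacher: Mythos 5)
Your Schur-complement reduction of the feasible set to $\MM$, your ``easy'' direction, and your monotonicity argument showing that any optimizer $\hY$ of Problem~\ref{mypb} must satisfy the CARE \eqref{FiltCARE0} are all sound (that last step is a nice addition; the paper never argues it explicitly). The genuine gap is in your maximality step, and it is not a repairable technicality: your comparison argument needs $\rho(\mathcal{V})<1$ for the operator \eqref{mathcalV} built from the optimizer's own gain $\hM=\mathcal{M}(\hY)$, i.e.\ it needs the trace-maximizing CARE solution to be \emph{mean square stabilizing}, and this is false in general. Mean square detectability guarantees (via the paper's Lemma~\ref{lemmaximal}) that a \emph{maximal} solution exists, but not that a \emph{stabilizing} one exists; Theorem~\ref{lemuniquestab} deliberately says ``at most one.'' A concrete counterexample within the paper's standing assumptions: take $\mathbf{I}=1$, $q_{11}=1$, $\hat{\gamma}_1=1$, scalar data $A=1$, $L=1$, $H=1$, $G=0$ (so $GG^*\succeq0$, $GH^*=0$, $HH^*\succ0$). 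Detectability holds ($M=-\tfrac{1}{2}$ gives spectral radius $\tfrac{1}{4}$), the CARE reads $Y^{2}(1+Y)^{-1}=0$, so $\MM=\{0\}$ and $\hY=Y^{+}=0$; but $\mathcal{M}_1(0)=0$, the closed-loop matrix is $A+0\cdot L=1$, and $\rho(\mathcal{V})=1$. Your Neumann series $(\mathcal{I}-\mathcal{V})^{-1}=\sum_{j\ge0}\mathcal{V}^{j}$ then fails to converge, and no appeal to the theory in Costa et al.\ can rescue the step, because the intermediate claim itself is false even though the theorem's conclusion ($\hY=Y^{+}$) still holds.

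The paper's proof takes a route that never requires stability of the gain attached to the limit. Under detectability, Lemma~\ref{lemmaximal} constructs $Y^{+}$ by the Riccati iteration \eqref{OnMl}, in which \emph{every iterate's} gain is stabilizing --- this property is propagated along the iteration by Lemma~\ref{lemmathvmathvbar} together with the identities of Lemma~\ref{lemequations}, and the positivity argument (\cite[Proposition 3.20]{COSTA2005}) is applied only at those strictly stable iterates --- and the decreasing limit $Y^{+}$ dominates every element of $\MM$ even if its own gain is only marginally stable. With that lemma in hand, the converse direction of the theorem is a short trace comparison: optimality of $\hY$ forces $\mathrm{tr}\bigl(\sum_{n=1}^{\mathbf{I}}(Y_n^{+}-\hY_n)\bigr)\le 0$, while $Y^{+}\succeq\hY$ gives $Y_n^{+}-\hY_n\succeq0$ for every $n$, hence $\hY=Y^{+}$. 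If you want to keep your structure, replace your stabilizing-gain step by a proof of Lemma~\ref{lemmaximal}: that existence result is where the real work of this theorem lives.
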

\begin{proof}
See Appendix.
\end{proof}
The optimal mode-dependent filtering gain is:
\begin{align*}
&M_n=\mathcal{M}_n(Y
)\triangleq -A \mathcal{D}_n(Y) L^*\left(\pi_n^{\infty}HH^*+L \mathcal{D}_n(Y) L^*\right)^{-1},\nonumber\\
& n\in\mathbb{S}_{\eta},
\end{align*}
\noindent where {\small $Y\in\MM$} is the maximal solution of  \eqref{FiltCARE0}, i.e. it is the solution of Problem \ref{mypb}, and the {\it optimal performance index} achieved by the filter is:
\beq
J_f^{*}=\sum\limits_{m=1}^{\mathbf{I}} \pi^{\infty}_m\mathrm{tr}(Y_m).
\eeq
\begin{mydef}[{\it Mean square stabilizing solution of \eqref{FiltCARE0}}]\label{def:mssol}
We say that {\small $Y\!\!\in\!\!\LL$} is the mean square stabilizing solution for the Filtering CARE if it satisfies \eqref{FiltCARE0} and {\small $\rho(\mathcal{V})\!\!<\!\!1$}, with {\small $\Gamma_{n1}\!=A\!+\!\mathcal{M}_n(Y)L,\,\Gamma_{n0}\!=A,\,n\!\in\!\mathbb{S}_{\eta}$}; i.e. {\small $\mathcal{M}_n(Y)$} stabilizes the error system \eqref{nexterrordynamics} in the mean square sense.
\end{mydef}
\noindent We present the  connection between the maximal solution and the mean square stabilizing 
solution for the Filtering CARE \eqref{FiltCARE0} in the next theorem.
\begin{thm}[{\it Mean square stabilizing solution of \eqref{FiltCARE0}}]\label{lemuniquestab} 
There exists at most one {\it mean square stabilizing solution} for the Filtering CARE, which will coincide with the maximal solution in {\small $\MM$}, that is the solution of the above convex programming problem. 
\end{thm}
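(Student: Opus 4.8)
The plan is to prove the single stronger fact that \emph{any} mean square stabilizing solution of the Filtering CARE \eqref{FiltCARE0} is the maximal element of $\MM$; both assertions then follow at once, since maximality yields uniqueness (two maximal elements dominate one another, hence coincide) and, by \emph{Theorem} \ref{thmmaximal}, the maximal element equals the convex-program solution $\hY$. Throughout, let $Y^{\mathrm{s}}\in\LL$ be a mean square stabilizing solution in the sense of \emph{Definition} \ref{def:mssol}, set $M^{\mathrm{s}}_n\triangleq\mathcal{M}_n(Y^{\mathrm{s}})$, and let $\mathcal{V}^{\mathrm{s}}$ denote the operator $\mathcal{V}$ of \eqref{mathcalV} built with $\Gamma_{n1}=A+M^{\mathrm{s}}_n L$ and $\Gamma_{n0}=A$, so that $\rho(\mathcal{V}^{\mathrm{s}})<1$ by definition of stabilizing solution.

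The first step is a completion-of-squares identity for the Riccati operator. For any gain $M=[M_n]_{n=1}^{\mathbf{I}}$ I introduce the affine operator
\beq
\mathcal{L}^{M}_n(Y)\triangleq\mathcal{V}^{M}_n(Y)+\pi^{\infty}_n\!\left(GG^*+\hat{\gamma}_n M_n HH^* M_n^*\right),
\eeq
where $\mathcal{V}^{M}$ is $\mathcal{V}$ with $\Gamma_{n1}=A+M_nL,\ \Gamma_{n0}=A$; this is precisely the stationary one-step second-moment map of the error system \eqref{nexterrordynamics} read off from \emph{Proposition} \ref{propmeanerror} with $\pi_n(k)$ replaced by $\pi^{\infty}_n$. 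Expanding $\mathcal{L}^{M}_n(Y)$, substituting $\mathcal{D}_n(Y)$ and completing the square in $M_n$ yields
\beq\label{compsquare}
\mathcal{L}^{M}_n(Y)=\mathcal{Y}_n(Y)+\hat{\gamma}_n\!\left(M_n-\mathcal{M}_n(Y)\right)\tilde{\mathcal{R}}_n(Y)\!\left(M_n-\mathcal{M}_n(Y)\right)^{*}\!,
\eeq
valid for every $Y$ for which $\tilde{\mathcal{R}}_n(Y)$ is defined. In particular, whenever $\tilde{\mathcal{R}}_n(Y)\succeq0$ the correction term is positive semi-definite, so $\mathcal{Y}_n(Y)\preceq\mathcal{L}^{M}_n(Y)$ for every gain $M$, with equality at $M_n=\mathcal{M}_n(Y)$.

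The second step is the comparison argument. Since $Y^{\mathrm{s}}$ solves the CARE, \eqref{compsquare} at $M=M^{\mathrm{s}}$ gives $Y^{\mathrm{s}}_n=\mathcal{Y}_n(Y^{\mathrm{s}})=\mathcal{L}^{M^{\mathrm{s}}}_n(Y^{\mathrm{s}})$, as the correction vanishes at the minimizing gain. For an arbitrary $Y\in\MM$ I combine $\tilde{\mathcal{R}}_n(Y)\succ0$, the defining inequality $-Y+\mathcal{Y}(Y)\succeq0$, and \eqref{compsquare} evaluated at the \emph{fixed} gain $M^{\mathrm{s}}$ to obtain $Y_n\preceq\mathcal{Y}_n(Y)\preceq\mathcal{L}^{M^{\mathrm{s}}}_n(Y)$. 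Subtracting from the relation for $Y^{\mathrm{s}}$, using linearity of $\mathcal{V}^{\mathrm{s}}$ and the cancellation of the noise terms $\pi^{\infty}_n(GG^*+\hat{\gamma}_n M^{\mathrm{s}}_n HH^*(M^{\mathrm{s}}_n)^{*})$, I get, with $\Delta\triangleq Y^{\mathrm{s}}-Y$,
\beq
\Delta_n\succeq\mathcal{V}^{\mathrm{s}}_n(\Delta),\qquad\text{i.e.}\qquad\left(\mathcal{I}-\mathcal{V}^{\mathrm{s}}\right)(\Delta)\succeq0,
\eeq
where $\mathcal{I}$ is the identity operator. Since $\rho(\mathcal{V}^{\mathrm{s}})<1$ and $\mathcal{V}^{\mathrm{s}}$ is a positive operator (\emph{Remark} \ref{remrhov}), the inverse $(\mathcal{I}-\mathcal{V}^{\mathrm{s}})^{-1}=\sum_{k\geq0}(\mathcal{V}^{\mathrm{s}})^{k}$ exists and is positive, hence $\Delta\succeq0$, i.e. $Y^{\mathrm{s}}\succeq Y$. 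As $Y^{\mathrm{s}}$ itself lies in $\MM$ (it solves the CARE, so $-Y^{\mathrm{s}}+\mathcal{Y}(Y^{\mathrm{s}})=0\succeq0$, and $\tilde{\mathcal{R}}(Y^{\mathrm{s}})\succ0$), it is the maximal element of $\MM$; by \emph{Theorem} \ref{thmmaximal} it coincides with $Y^{+}=\hY$, and applying the domination to two stabilizing solutions yields uniqueness.

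I expect the crux to be the second step rather than the bookkeeping behind \eqref{compsquare}: the decisive idea is to compare $Y$ against $Y^{\mathrm{s}}$ through the gain $M^{\mathrm{s}}$ \emph{of the stabilizing solution}, so that the difference $\Delta$ is governed by $\mathcal{V}^{\mathrm{s}}$ — the only operator known a priori to have spectral radius below one — and then to invoke positivity of $(\mathcal{I}-\mathcal{V}^{\mathrm{s}})^{-1}$ to pin down the sign of $\Delta$. A minor technical obligation is to verify $\tilde{\mathcal{R}}(Y^{\mathrm{s}})\succ0$ (strictly, not merely non-singular as guaranteed by $Y^{\mathrm{s}}\in\LL$), which secures $Y^{\mathrm{s}}\in\MM$; this follows from $HH^*\succ0$ in \eqref{systemmatrices} and $\mathcal{D}_n(Y^{\mathrm{s}})\succeq0$ once $Y^{\mathrm{s}}$ is seen to be positive semi-definite as a steady-state error covariance.
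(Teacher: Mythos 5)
Your proof is correct, but it takes a genuinely different route from the paper's. The paper's argument first uses the existence of a stabilizing solution to conclude mean square detectability, then invokes Lemma \ref{lemmaximal} (whose proof is a separate inductive construction) to produce the maximal solution $Y^{+}\in\MM$, and finally compares the stabilizing solution $\hY$ against $Y^{+}$ via identity \eqref{item2} of Lemma \ref{lemequations}: since both satisfy \eqref{FiltCARE0}, the difference $\hY-Y^{+}$ obeys a Lyapunov equation driven by the stabilizing operator with positive semi-definite right-hand side, so \cite[Proposition 3.20]{COSTA2005} gives $\hY\succeq Y^{+}$, whence $\hY\in\MM$ and maximality forces $\hY=Y^{+}$. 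You instead compare the stabilizing solution $Y^{\mathrm{s}}$ directly against an \emph{arbitrary} $Y\in\MM$: your completion-of-squares identity (which is the same algebraic content as Lemma \ref{lemequations}) gives $Y_n\preceq\mathcal{Y}_n(Y)\preceq\mathcal{V}^{\mathrm{s}}_n(Y)+\pi^{\infty}_n\left(GG^*+\hat{\gamma}_n M^{\mathrm{s}}_n HH^*(M^{\mathrm{s}}_n)^*\right)$, subtraction yields $(\mathcal{I}-\mathcal{V}^{\mathrm{s}})(Y^{\mathrm{s}}-Y)\succeq0$, and the positive Neumann-series inverse $(\mathcal{I}-\mathcal{V}^{\mathrm{s}})^{-1}=\sum_{k\geq0}(\mathcal{V}^{\mathrm{s}})^{k}$ (which is exactly the mechanism behind \cite[Proposition 3.20]{COSTA2005}) pins down $Y^{\mathrm{s}}\succeq Y$. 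This bypasses Lemma \ref{lemmaximal} entirely: the stabilizing solution is shown \emph{from scratch} to be the maximum of $\MM$, so existence of the maximal solution comes for free rather than being a prerequisite, and Theorem \ref{thmmaximal} (applicable since the gain $\mathcal{M}(Y^{\mathrm{s}})$ itself witnesses detectability per Definition \ref{msddef}) then identifies it with the convex-program solution. The trade-off: the paper's proof is shorter given machinery it needs anyway for Theorem \ref{thmmaximal}, while yours is more self-contained and even slightly stronger in structure. Two loose ends in your write-up close cleanly but should be made explicit: (i) $Y^{\mathrm{s}}\succeq0$ follows rigorously by applying your own Neumann-series positivity to $(\mathcal{I}-\mathcal{V}^{\mathrm{s}})(Y^{\mathrm{s}})=\mathcal{O}(\mathcal{M}(Y^{\mathrm{s}}))\succeq0$, rather than by appeal to a ``steady-state error covariance'' interpretation; and (ii) $\tilde{\mathcal{R}}_n(Y^{\mathrm{s}})\succ0$ needs $\pi^{\infty}_n>0$, which is guaranteed by the ergodicity assumption on the Markov chain $\eta$, not by $HH^*\succ0$ alone.
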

\begin{proof}
See Appendix.
\end{proof}

\noindent Clearly the necessary condition for the existence of the mean square stabilizing solution of the Filtering CARE is the mean square detectability of system \eqref{mathcalG1}.
\section{The separation principle}\label{sec:sep_princ}
\noindent
Consider the optimal output feedback controller \eqref{K1}, with optimal matrices
\begin{align*}
\widehat{A}(\nu_{\theta_k},\theta_{k-1},\gamma_{\eta_k}) &= A + \nu_{\theta_k}BF_{\theta_{k-1}} + \gamma_{\eta_k}M_{\eta_k}L,\\\widehat{B}_{\eta_k} &= -M_{\eta_k},\ \widehat{C}_{\theta_{k-1}} = F_{\theta_{k-1}}.
\end{align*}
\noindent Then \eqref{K1} coincides with \eqref{tildemathcalG}, and the dynamics of {\small $x_{k+1}$} becomes:
\begin{align*}
x_{k\!+\!1}=\left(A + \nu_{\theta_{k}}BF_{\theta_{k - 1}}\right)x_{k} - \nu_{\theta_{k}}BF_{\theta_{k - 1}}\te_{k} + Gw_{k}.
\end{align*}
\indent Recalling the error dynamics in \eqref{nexterrordynamics},
 the closed-loop system dynamics is given by:
\beq\label{mathcalGcl}
\tilde{\mathcal{G}}_{cl}:\,\mathbf{\mathcal{E}}_{k+1}=
\mathbf{\Gamma}\left(\nu_{\theta_k},\theta_{k-1},\gamma_{\eta_k}\right)\mathbf{\mathcal{E}}_{k} + 
\mathbf{\Sigma}(\gamma_{\eta_k}\!) w_{k},
\eeq
\noindent with
\begin{align}
\mathbf{\mathcal{E}}_k\!\triangleq\! \bmat x_{k}\\\te_{k}\emat,\quad\;
\mathbf{\Sigma}\left(\gamma_{\eta_{k}}\right)\!
\triangleq\!\bmat G \\G\!+\!\gamma_{\eta_{k}}M_{\eta_{k}}H\emat ,\qquad\qquad\qquad\qquad\\
\mathbf{\Gamma}\!\left(\nu_{\theta_k},\theta_{k-1},\gamma_{\eta_k}\right)\!\triangleq\!\bmat
\left(A\!+\!\nu_{\theta_{k}}BF_{\theta_{k-1}}\right) && -\nu_{\theta_{k}}BF_{\theta_{k-1}}\\
O_{n_x} && \left(A+\gamma_{\eta_{k}}M_{\eta_{k}}L\right)
\emat .\label{eq:GAMMA}
\end{align}
\noindent 
In this section, we present the separation principle, as the main result of this paper. 
%
\noindent
\begin{thm}\label{thmsep}
Given Markov Jump linear system \eqref{mathcalG1}, and the Luenberger like observer \eqref{tildemathcalG}, the dynamics \eqref{mathcalGcl} can be made mean square stable if and only if system \eqref{mathcalG1} is mean square detectable according to Definition \ref{msddef}, and mean square stabilizable with one time step delay in the observation of  actuation channel mode according to Definition  \ref{Def:MSstab}.
\end{thm}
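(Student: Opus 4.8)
The plan is to exploit the block upper-triangular structure of $\mathbf{\Gamma}(\nu_{\theta_k},\theta_{k-1},\gamma_{\eta_k})$ in \eqref{eq:GAMMA}. First I would invoke the operator-theoretic characterization of mean square stability used throughout the paper (as in \cite{COSTA2005}): since the additive term $\mathbf{\Sigma}(\gamma_{\eta_k})w_k$ has bounded covariance, the closed loop \eqref{mathcalGcl} is mean square stable in the sense of Definition \ref{def:mss} if and only if its homogeneous part is, which holds iff the associated second-moment operator $\mathcal{L}$ — acting on the mode-indexed second moments $Q_\sigma(k)\triangleq\EE[\mathbf{\mathcal{E}}_k\mathbf{\mathcal{E}}_k^*\ind_{\{\sigma_k=\sigma\}}]$, with $\sigma$ the $4\mathbf{N}^2\mathbf{I}$-ary augmented mode — satisfies $\rho(\mathcal{L})<1$. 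Convergence of $\EE[x_k]$ and $\EE[x_kx_k^*]$ to equilibria $\mu_e,Q_e$ then follows in the usual way.

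The key structural observation is that, partitioning each $Q_\sigma$ conformally with $\mathbf{\mathcal{E}}_k=\bmat x_k\\ \te_k\emat$ into blocks $Q_{11},Q_{12},Q_{21},Q_{22}$, the block triangularity of every $\mathbf{\Gamma}(\cdot)$ renders $\mathcal{L}$ block triangular in these components. Indeed, the $(2,2)$ block of $\mathbf{\Gamma}Q\mathbf{\Gamma}^*$ equals $(A+\gamma_{\eta_k}M_{\eta_k}L)Q_{22}(A+\gamma_{\eta_k}M_{\eta_k}L)^*$, so $Q_{22}$ propagates autonomously; the $(1,2)$ block depends only on $Q_{12}$ and $Q_{22}$; and the $(1,1)$ block depends on all. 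Ordering the components as $(Q_{22},Q_{12},Q_{21},Q_{11})$ thus makes $\mathcal{L}$ block (lower) triangular, whence $\rho(\mathcal{L})=\max\{\rho(\mathcal{L}_{22}),\rho(\mathcal{L}_{12}),\rho(\mathcal{L}_{21}),\rho(\mathcal{L}_{11})\}$. Using the independence of the chains $\theta$ and $\eta$ (Assumption (iii)) to marginalize out the irrelevant modes, I would identify the diagonal sub-operators: $\mathcal{L}_{22}$ coincides with the error operator $\mathcal{V}$ of \eqref{mathcalV} taken with $\Gamma_{n1}=A+M_nL,\ \Gamma_{n0}=A$, while $\mathcal{L}_{11}$ coincides with the second-moment operator of the state-feedback closed loop $x_{k+1}=(A+\nu_{\theta_k}BF_{\theta_{k-1}})x_k$.

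With this decomposition, necessity is immediate: if \eqref{mathcalGcl} is mean square stable then $\rho(\mathcal{L})<1$, hence $\rho(\mathcal{L}_{22})=\rho(\mathcal{V})<1$, which is exactly mean square detectability (Definition \ref{msddef}), and $\rho(\mathcal{L}_{11})<1$, which means that $u_k=F_{\theta_{k-1}}x_k$ is mean square stabilizing, i.e. \eqref{mathcalG1} is mean square stabilizable with one-step delay (Definition \ref{Def:MSstab}). For sufficiency I would take $F$ and $M$ to be the stabilizing solutions of the Control and Filtering CAREs, yielding $\rho(\mathcal{L}_{11})<1$ and $\rho(\mathcal{L}_{22})<1$, and must then control the cross blocks.

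The hard part is precisely this last step: showing $\rho(\mathcal{L}_{12})<1$ (and, by conjugation, $\rho(\mathcal{L}_{21})<1$) given that the two diagonal operators are contractions. Rather than analyze the cross operator's spectrum directly, I would argue via a Lyapunov certificate. Mean square stability of the two diagonal subsystems furnishes mode-indexed $\mathbf{P}^{(1)},\mathbf{P}^{(2)}\succ0$ solving the respective coupled Lyapunov inequalities; I would then verify that the block-diagonal candidate $\mathrm{diag}(\varepsilon\,\mathbf{P}^{(1)},\mathbf{P}^{(2)})$ satisfies the coupled Lyapunov inequality for $\mathcal{L}^*$ once $\varepsilon>0$ is small enough, the off-diagonal coupling $-\nu_{\theta_k}BF_{\theta_{k-1}}$ being absorbed by a Schur-complement/completion-of-squares estimate. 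Existence of such a certificate is equivalent to $\rho(\mathcal{L})<1$, hence to mean square stability of the closed loop, which closes the argument. The only genuinely delicate point is the uniform-in-the-modes domination of the cross term, which the smallness of $\varepsilon$ together with $\rho(\mathcal{L}_{11}),\rho(\mathcal{L}_{22})<1$ secures; everything else is the block-triangular bookkeeping already implicit in \cite{COSTA2005}.
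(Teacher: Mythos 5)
Your proposal is correct and follows essentially the same route as the paper's own proof: both directions rest on the block upper-triangular structure of $\mathbf{\Gamma}$ in \eqref{eq:GAMMA}, reading mean square detectability and stabilizability off the diagonal blocks for necessity, and combining the stabilizing gains $F$ and $M$ for sufficiency. The only difference is one of rigor, not of route: the paper's proof simply asserts that the triangular structure yields both implications, whereas you spell out the second-moment operator decomposition, the marginalization over the independent chains, and the small-$\varepsilon$ block-diagonal Lyapunov certificate absorbing the cross term $-\nu_{\theta_k}BF_{\theta_{k-1}}$ --- machinery the paper leaves implicit.
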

\begin{proof}
See Appendix.
\end{proof}
\begin{rem}
Differently from \cite{COSTA2005}, the matrix {\small $\mathbf{\Gamma}\left(\nu_{\theta_k},\theta_{k-1},\gamma_{\eta_k}\right)$} contains the Markov jumps not only of the MC {\small $\eta$} (sensing channel), but of the MC {\small $\theta$} (actuation channel) too. Moreover, we consider the actuation delay that affects the MC {\small $\{\theta_k\}_{k\in\NN}$}. Finally, is \eqref{eq:GAMMA} is an upper triangular block matrix, i.e. the error dynamics (driven by the MC {\small $\eta$}) does not depend on the state dynamics (driven by the MC {\small $\theta$}).
\end{rem} 
\section{NUMERICAL EXAMPLE}\label{sec:ex}
\noindent Consider the inverted pendulum on a cart as in \cite{franklin2009feedback}. 
%
The state variables of the plant are the cart position coordinate {\small $\mathrm{x}$} 
and the pendulum angle from vertical {\small $\phi$}, together with respective first 
derivatives. 
We aim to design a controller that stabilizes the 
pendulum in up-right position, corresponding to unstable equilibrium point 
{\small $\mathrm{x}^{\star}\!\!=\!\!0\,$}m, {\small $\phi^{\star}\!\!=\!\!0\,$}rad. The system state is
defined by {\small $x\!=\!\begin{bmatrix}
\delta\mathrm{x}, \delta\dot{\mathrm{x}}, \delta\phi, \delta\dot{\phi}
\end{bmatrix}'$}, where 
{\small $\delta\mathrm{x}(t)\!=\!\mathrm{x}(t)\!-\!\mathrm{x}^{\star}$}, and
{\small $\delta\phi(t)\!=\!\phi(t)\!-\!\phi^{\star}$}. The initial state of the plant is
{\small $x_0\!=\!\begin{bmatrix} 0, 0, \frac{\pi}{10}, 0 \end{bmatrix}'\!$}, while the initial state of the observer is {\small $\tilde{x}_0\!=\!\bmat 1,0,(11\pi/100),0 \emat '$}. The optimal Markov jump output-feedback controller \eqref{K1} has been applied to the  discrete time linear model derived from the continuous time nonlinear model, by linearization.
The state space model of the system is linearized around the unstable 
equilibrium point and discretized with sampling period 
{\small $\mathrm{T}_{\mathrm{s}}\!=\!0.01\,$}s:
%
\vspace*{-1mm}
\begin{small}
\begin{equation*}
A\!=\!\begin{bmatrix}
1.000 &  0.010, &  0.000 &  0.000\\
0.000 &   0.998 &  0.027 &  0.000\\
0.000 &  0.000 &  1.002 &  0.010\\
0.000 &  -0.005 &  0.312 &   1.002\end{bmatrix}\!,\,
B\!=10^{-1}*\!\begin{bmatrix}
	0.00091 \\ 0.182 \\ 0.0023 \\ 0.474
\end{bmatrix}\!.
\end{equation*}
\end{small}The weighting matrices in {\small $z_k$} are {\small $C^*C\!\!\!\!=\!\!\!\!\bigoplus (1000,0.1,10000,0.1)$, $D\!\!\!=\!\!\!1$}, while matrices {\small $H$ } and {\small $G$ } are such that {\small
$
HH^*\!\!=\!\!\mathbb{I}_{n_x}\!\!\succ\!\!0,\; GG^*=\bmat \II_2 & \II_2\\ \II_2 & \II_2
\emat\!\!\succeq\!\!0.
$} 
The process noise is characterized by the covariance matrix
{\small $\EE[w_kw_k^*]\!\!=\!\!\tilde{\alpha}_w\mathbb{I}_{n_w}$}\footnote{We consider the noise covariance matrix as a positive scalar less than {\small $1$, $\tilde{\alpha}_w$}, multiplying the identity matrix. Indeed, the results shown in the previous section can be applied without any loss of generality}, with {\small $\tilde{\alpha}_w=0.0002$}.
The state matrix {\small $A$} is unstable, since it has an eigenvalue {\small $1.058$}, but
it is easy to verify that {\small $D^*D\!\!\succ\!\!0$}, {\small $C^*C\!\!\succeq\!\!0$}, 
the pair {\small $\left(A,B\right)$} is controllable, while
the pair {\small $\left(A,L\right)$},  is observable, 
so the closed-loop system 
is asymptotically stable, if {\small $\nu_k\!\!=\!\!1$ and $\gamma_k\!\!=\!\!1$} {\small $\forall k$}. Moreover, the necessary conditions for the {\it existence of the mean square stabilizing solution for the Control and Filtering CARE, are satisfied.} 
The double sided packet loss is described by Markov channels with {\bf TPMs} in {\small $\RR^{12\times 12}$}, \footnote{The symbol  "{\small $\cdots$}" in the TPMs stands for elements that are approximately equal to zero, i.e. elements with the first four decimal numbers equal to zero.}:
\begin{small}
\beqs
\resizebox{0.9\hsize}{!}{%
$
P\!=\!Q\!=\!\bmat     0  &  2\cdot 10^{-4}  & \cdots  &  1\cdot 10^{-4}  &  1\cdot 10^{-4}  &  1\cdot 10^{-4}  &  0.0071  &  0.9922\\
           0  &  2\cdot 10^{-4}  &  \cdots  &  1\cdot 10^{-4}  &  1\cdot 10^{-4}  &  1\cdot 10^{-4}  &  0.0070  &  0.9923\\
           0  &  2\cdot 10^{-4}  &  \cdots  &  1\cdot 10^{-4}  &  1\cdot 10^{-4}  &  1\cdot 10^{-4}  &  0.0070  &  0.9924\\
           0  &  2\cdot 10^{-4}  &  \cdots  &  1\cdot 10^{-4}  &  1\cdot 10^{-4}  &  1\cdot 10^{-4}  &  0.0069  &  0.9924\\
           0  &  2\cdot 10^{-4}  &  \cdots  &  1\cdot 10^{-4}  &  1\cdot 10^{-4}  &  1\cdot 10^{-4}  &  0.0069  &  0.9924\\
           0  &  2\cdot 10^{-4}  &  \cdots  &  1\cdot 10^{-4}  &  1\cdot 10^{-4}  &  1\cdot 10^{-4}  &  0.0069  &  0.9924\\
           0  &  2\cdot 10^{-4}  &  \cdots  &  1\cdot 10^{-4}  &  1\cdot 10^{-4}  &  1\cdot 10^{-4}  &  0.0069  &  0.9924\\
           0  &  2\cdot 10^{-4}  &  \cdots  &  1\cdot 10^{-4}  &  1\cdot 10^{-4}  &  1\cdot 10^{-4}  &  0.0069  &  0.9924\\
           0  &  2\cdot 10^{-4}  &  \cdots  &  1\cdot 10^{-4}  &  1\cdot 10^{-4}  &  1\cdot 10^{-4}  &  0.0069  &  0.9924\\
           0  &  2\cdot 10^{-4}  &  \cdots  &  1\cdot 10^{-4}  &  1\cdot 10^{-4}  &  1\cdot 10^{-4}  &  0.0069  &  0.9924\\
           0  &  2\cdot 10^{-4}  &  \cdots  &  1\cdot 10^{-4}  &  1\cdot 10^{-4}  &  1\cdot 10^{-4}  &  0.0068  &  0.9925\\
           0  &  2\cdot 10^{-4}  &  \cdots  &  1\cdot 10^{-4}               & 1\cdot 10^{-4}  &  1\cdot 10^{-4}  &  0.0063            &  0.9931\emat$
}
\eeqs
\end{small}
and packet losses probability vectors
\begin{small}
\beqs
\resizebox{0.9\hsize}{!}{%
$
\begin{aligned}
\hat{\nu}\!\!&=\!\!\hat{\gamma}\\
&=\![0,    0.02, 0.15,    0.25,    0.35,    0.45, 0.55,    0.65,    0.75,    0.86,    0.99,    1].
\end{aligned}
$
}
\eeqs
\end{small}These channels are obtained by following the systematic procedure in \cite{Lun2020ontheimpact} accounting for path loss, shadow fading, transmission power control and interference. The partitioning of the SNIR range is based on the values of PEP, so that to each SNIR threshold corresponds a specific value of PEP.
\begin{figure}
	\centering	\includegraphics[width=0.5\textwidth]{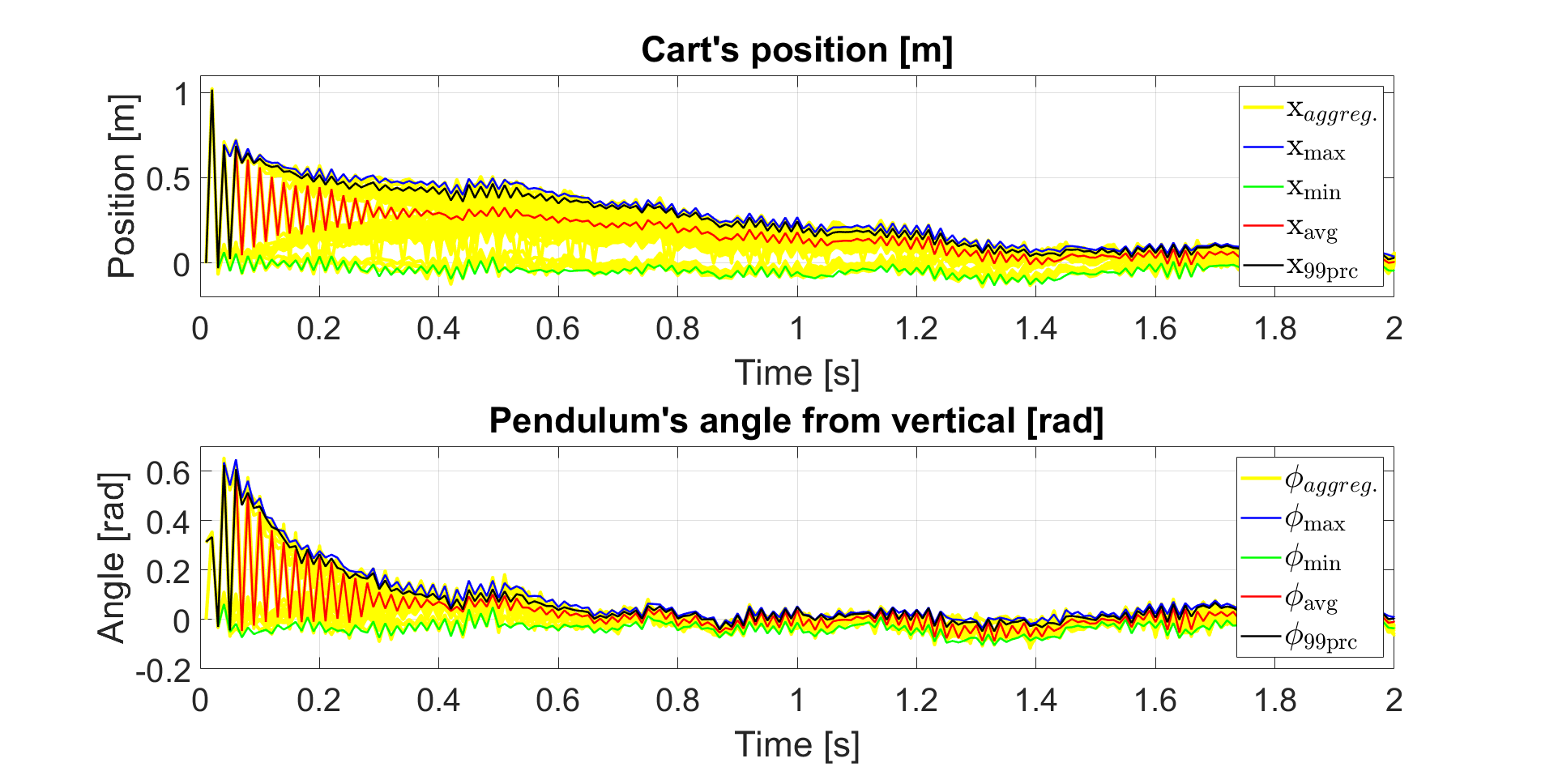}\\
	\vspace{-4mm}	
	\caption{Traces of the system states, generated in the case with noise.}
	\label{fig:Markov-noisestates}
\end{figure}
\begin{figure}
	\centering	\includegraphics[width=0.5\textwidth]{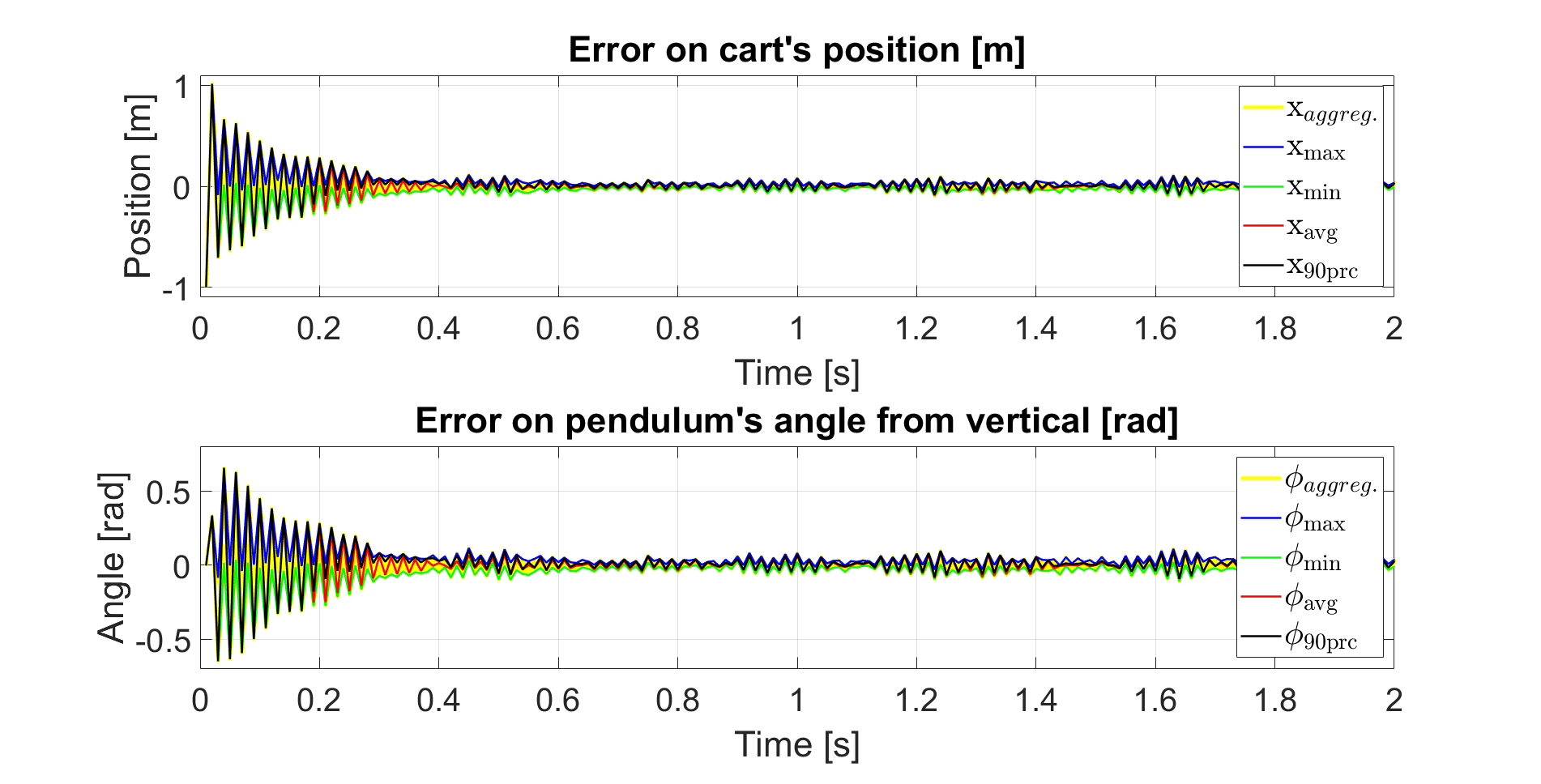}\\
	\vspace{-4mm}	
	\caption{Traces of the error, generated in the case with noise.}
	\label{fig:Markov-noiseerror}
\end{figure}
\begin{figure}
	\centering	\includegraphics[width=0.5\textwidth]{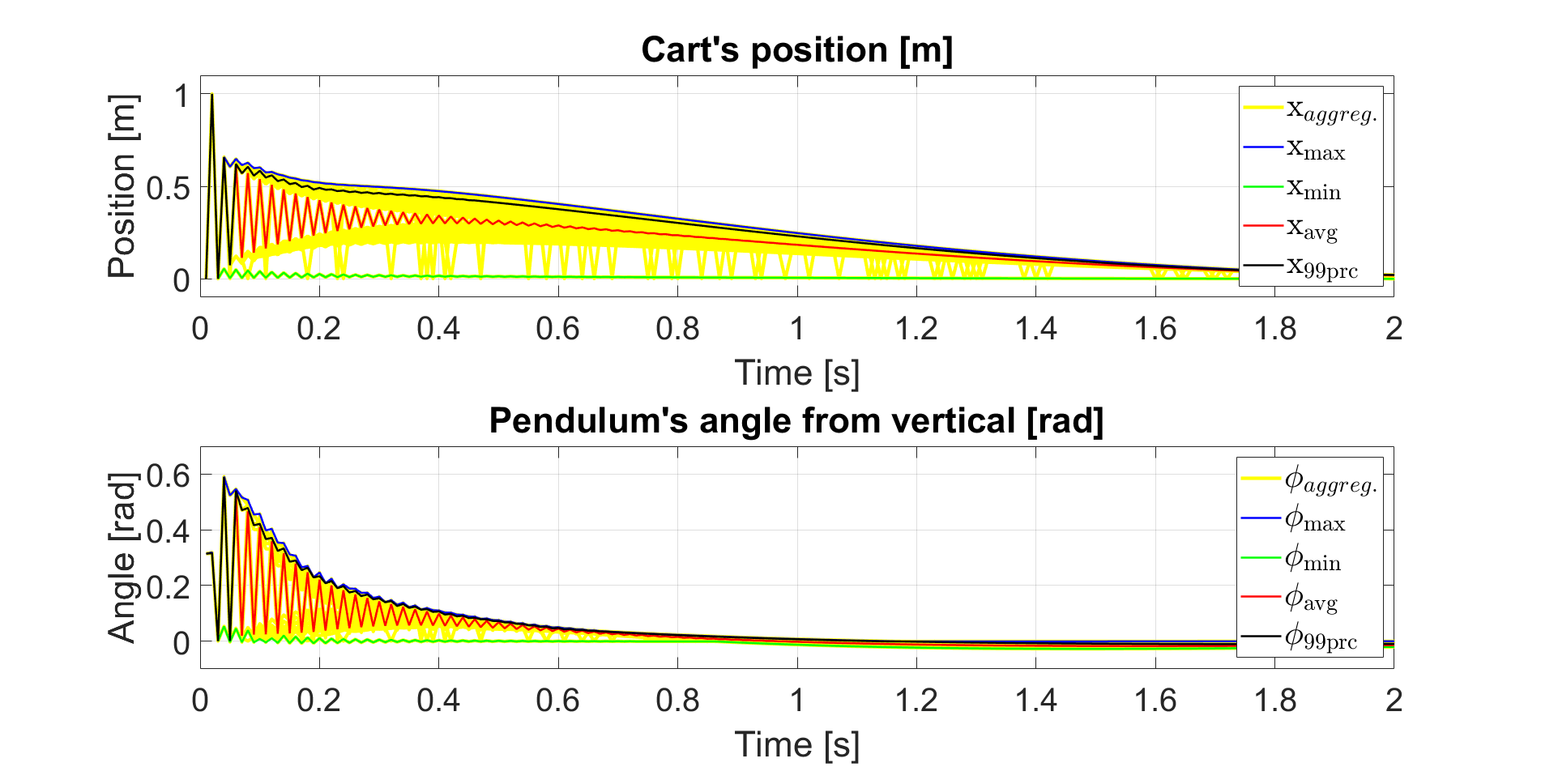}\\
	\vspace{-4mm}	
	\caption{Traces of the system state, generated in the noiseless case.}	\label{fig:Markov-nlstates}
\end{figure}
\begin{figure}
	\centering	\includegraphics[width=0.5\textwidth]{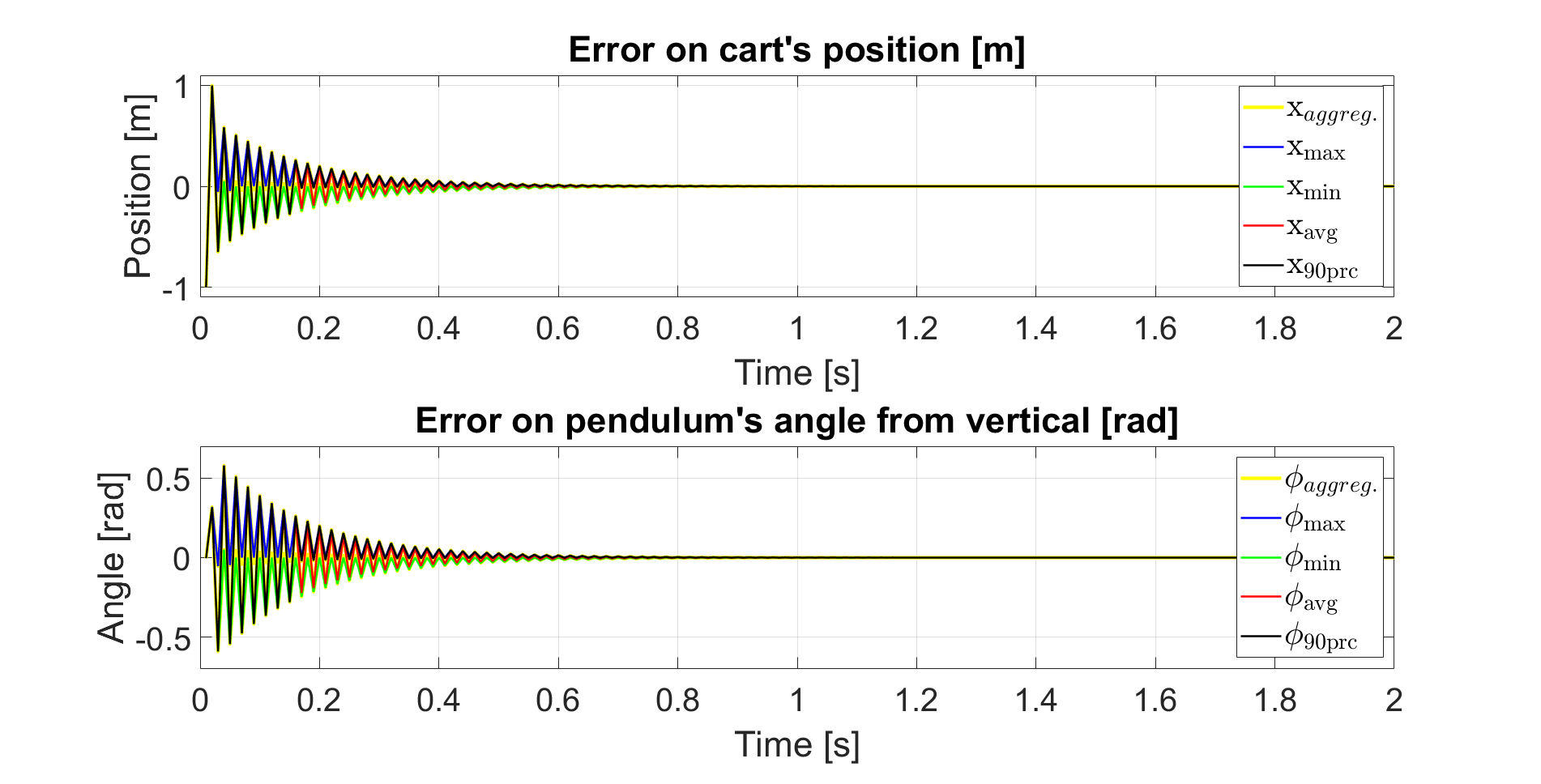}\\
	\vspace{-4mm}	
	\caption{Traces of the error, generated in the noiseless case.}	\label{fig:Markov-nlerror}
\end{figure}In Fig. \ref{fig:Markov-noisestates}-\ref{fig:Markov-noiseerror}, we show the traces of the system states, and of the error components, respectively, in  presence of noise. Comparing figures \ref{fig:Markov-noisestates}-\ref{fig:Markov-noiseerror} with \ref{fig:Markov-nlstates}-\ref{fig:Markov-nlerror}, it may be noted that all the aforementioned trajectories show a convergent behaviour. Indeed the closed-loop system is mean square stable. Moreover, in the noiseless case the traces of the state and of the error converge to zero as we expected. The simulation results with this practical example fully validate the provided theory.
\section{Conclusion}\label{sec:Conclusions}
In this paper we considered the application of Markov jump linear systems to wireless networked control scenarios. We generalize the results of \cite{Lun2019stabilizability} to double-sided packet loss as one of the contributions.
Moreover, we design the optimal output-feedback controller, that 
can be obtained by solving separately the optimal control problem, and the  optimal filtering problem, using two kinds of coupled algebraic Riccati equations: one associated to the optimal filtering problem and the other one associated to the optimal control problem.
\bibliography{IEEEabrv,mybib1}
\appendix
%
\begin{proof}[Proof of Proposition \ref{propmeanerror}]
As far as the expression of {\small $\tm_n(k+1)$} is concerned, applying  the definition in \eqref{nextmnkdef}, the expression of \eqref{nexterrordynamics}, from the error dynamics and from the assumption that {\small $\EE\left[w_k\right]=0$}, we obtain the expression of {\small $
\tm_n(k+1)$} in  \eqref{nexttmkexp}. \\\noindent
As far as the expression of {\small $\tY_n(k+1)$} is concerned, applying  the definition in \eqref{nextYnkdef}, and the expression of the error dynamics in \eqref{nexterrordynamics}, the assumption {\small $GH^*=0$}, and the definition of $\mathcal{V}_n(\cdot)$, in \eqref{mathcalV}, one can easily obtain the expression of {\small $\tY_n(k+1)$} in \eqref{nexttmkexp}.\\\noindent The proof of the proposition is complete.
\end{proof}
\noindent In the following, we present instrumental results for the proof of separation principle. 
\begin{lem}\label{lemequations}
Suppose that $Y \in \LL$ and for some $\hM = [\hM_n]_{n=1}^{\mathbf{I}} \in\FF^{\mathbf{I} n_x\times n_y}$, $\hY \in \HH^{n_x,*}$, satisfies for $n \in \mathbb{S}_{\eta}$
\begin{align}\label{eqcondition1}
&\hY_n\!-\hat{\gamma}_n\!\left(\!A\!+\!\hM_n L\!\right)\!\mathcal{D}_n(\hY)\!\left(\!A+\hM_n L\!\right)^*\!\!-\nonumber\\
& (\!1\!-\!\hat{\gamma}_n)A\mathcal{D}_n(\hY)A^*\!= \mathcal{O}_n\!\left(\hM\right),\nonumber\\
&\mathcal{O}_n(\hM)\!\triangleq \!\pi_n^{\infty}\!\left(GG^*\!+\!\hat{\gamma}_n \hM_nHH^*\hM_n^*\!\right),
\end{align}
\noindent then, for $n \in \mathbb{S}_{\eta}$,
\begin{align}\label{item1}
&(\hY_n\!-\!Y_n)\!-\!\hat{\gamma}_n\!\left(A\!+\!\hM _n L\right)\!\mathcal{D}_n(\hY \!-\!Y)\!\left(A\!+\hM _n L\right)^*
\!-\nonumber\\
&(1\!-\!\hat{\gamma}_n)A\mathcal{D}_n(\hY\!-\!Y)A^*\!=\nonumber\\
&\mathcal{Y}_n\left(Y\right)\!-\!Y_n\!+\!\hat{\gamma}_n\left(\hM _n\!-\!\mathcal{M}_n(Y)\right)\!\tilde{\mathcal{R}}_n(Y)\!\left(\hM_n\!-\!\mathcal{M}_n(Y)\right)^*;\qquad\qquad\qquad
\end{align}
\noindent moreover, if $\hY\in\mathbb{L}$,  for $n \in \mathbb{S}_{\eta}$,
\begin{align}\label{item2}
&\left(\hY_n\!-\!Y_n\right)\!-\!\hat{\gamma}_n\!\left(A\!+\!\mathcal{M} _n(\hY) L\!\right)\!\mathcal{D}_n(\hY\!-\!Y)\!\left(A\!+\!\mathcal{M} _n(\hY) L\right)^*\!\!-\nonumber\\
&(1\!-\!\hat{\gamma}_n)A\mathcal{D}_n(\hY\!-\!Y)A^*=\nonumber\\&\hat{\gamma}_n\!\left(\mathcal{M}_n(\hY)\!-\!\mathcal{M}_n(Y)\right)\!\tilde{\mathcal{R}}_n(Y)\!\left(\mathcal{M}_n(\hY)\!-\!\mathcal{M}_n(Y)\right)^*\nonumber\\
&+\hat{\gamma}_n\!\left(\hM_n\!-\!\mathcal{M}_n(\hY)\right)\!\tilde{\mathcal{R}}_n(\hY)\!\left(\hM_n\!-\!\mathcal{M}_n(\hY)\right)^*\!+\!\mathcal{Y}_n(Y)\!-\!Y_n;
\end{align}
\item furthermore, if $\hX\!\in \! \HH^{\mathbf{I}n_x,*}$ and satisfies, for $n \in \mathbb{S}_{\eta}$
\begin{align}\label{eqcondition3}
&\hX_n\!-\!\hat{\gamma}_n\!\left(A\!+\!\mathcal{M}_n(\hY)L\right)\!\mathcal{D}_n(\hX)\!\left(A\!+\!\mathcal{M}_n(\hY)L\right)^*\!-\nonumber\\
&(1\!-\!\hat{\gamma}_n)A\mathcal{D}_n(\hX)A^* = \mathcal{O}_n(\mathcal{M}(\hY)),
\end{align}
\noindent for $n \in\!\mathbb{S}_{\eta}$ then,
\begin{align}\label{item3}
&\left(\hY_n\!-\!\hX_n\right)\!-\!\hat{\gamma}_n\left(A\!+\!\mathcal{M}_n(\hY)L\right)\!\mathcal{D}_n(\hY\!-\!\hX)\!\left(A\!+\!\mathcal{M}_n(\hY)L\right)^*\!\!\nonumber\\
&-(1\!-\!\hat{\gamma}_n)A \mathcal{D}_n(\hY\!-\!\hX)A^* =\nonumber\\
&=\!\hat{\gamma}_n\!\left(\hM_n\!-\!\mathcal{M}_n(\hY)\right)\!\tilde{\mathcal{R}}_n(\hY)\!\left(\hM_n\!-\!\mathcal{M}_n(\hY)\right)^*.
\end{align}
\end{lem}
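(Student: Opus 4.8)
The plan is to reduce all three identities to a single \emph{completion-of-squares} relation and then exploit the linearity of $\mathcal{D}_n$. To this end, for a gain matrix $K\in\FF^{n_x\times n_y}$ and $S=[S_m]_{m=1}^{\mathbf{I}}$ introduce the map, linear in $S$,
\beq\label{auxop}
\mathcal{L}_n^{K}(S)\triangleq\hat{\gamma}_n(A+KL)\mathcal{D}_n(S)(A+KL)^*+(1-\hat{\gamma}_n)A\mathcal{D}_n(S)A^*,
\eeq
so that the left-hand side of \eqref{item1} (resp.\ \eqref{item2}) is exactly $(\hY_n-Y_n)-\mathcal{L}_n^{K}(\hY-Y)$, and that of \eqref{item3} is $(\hY_n-\hX_n)-\mathcal{L}_n^{\mathcal{M}_n(\hY)}(\hY-\hX)$, for the relevant gain $K$; meanwhile the hypotheses \eqref{eqcondition1} and \eqref{eqcondition3} read $\hY_n-\mathcal{L}_n^{\hM_n}(\hY)=\mathcal{O}_n(\hM)$ and $\hX_n-\mathcal{L}_n^{\mathcal{M}_n(\hY)}(\hX)=\mathcal{O}_n(\mathcal{M}(\hY))$, respectively.

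The key step I would establish first is the master identity, valid for every $S\in\LL$ and every gain $K$:
\beq\label{master}
\mathcal{L}_n^{K}(S)+\mathcal{O}_n(K)=\mathcal{Y}_n(S)+\hat{\gamma}_n\big(K-\mathcal{M}_n(S)\big)\tilde{\mathcal{R}}_n(S)\big(K-\mathcal{M}_n(S)\big)^*,
\eeq
with $\mathcal{O}_n(K)\triangleq\pi_n^{\infty}(GG^*+\hat{\gamma}_nKHH^*K^*)$. Its proof is a direct expansion: collecting the $A\mathcal{D}_n(S)A^*$ and $\pi_n^{\infty}GG^*$ terms into $\tilde{\mathcal{A}}_n(S)$, the cross terms into $\hat{\gamma}_n\big(A\mathcal{D}_n(S)L^*K^*+KL\mathcal{D}_n(S)A^*\big)$, and the quadratic terms into $\hat{\gamma}_nK\tilde{\mathcal{R}}_n(S)K^*$, one recognizes the completed square on the right once one uses $\mathcal{M}_n(S)\tilde{\mathcal{R}}_n(S)=-A\mathcal{D}_n(S)L^*$ and $\hat{\gamma}_n\mathcal{M}_n(S)\tilde{\mathcal{R}}_n(S)\mathcal{M}_n(S)^*=\tilde{\mathcal{C}}_n(S)\tilde{\mathcal{R}}_n(S)^{-1}\tilde{\mathcal{C}}_n^*(S)$. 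Here the Hermitian symmetry of $\mathcal{D}_n(S)$, inherited from $S\in\HH^{\mathbf{I}n_x,*}$, ensures $(A\mathcal{D}_n(S)L^*)^*=L\mathcal{D}_n(S)A^*$, so the two cross terms are mutually adjoint. This algebraic verification of \eqref{master} is the only laborious part and the main obstacle; everything else is bookkeeping.

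With \eqref{master} in hand the three items follow by linearity of $\mathcal{L}_n^{K}$. For \eqref{item1} I write the left-hand side as $\big(\hY_n-\mathcal{L}_n^{\hM_n}(\hY)\big)-\big(Y_n-\mathcal{L}_n^{\hM_n}(Y)\big)$, replace the first bracket by $\mathcal{O}_n(\hM)$ via \eqref{eqcondition1}, and apply \eqref{master} at $(S,K)=(Y,\hM_n)$; the two copies of $\mathcal{O}_n(\hM)$ cancel and the claim drops out. For \eqref{item2}, where now $\hY\in\LL$, I invoke \eqref{master} three times: at $(\hY,\hM_n)$ to rewrite $\hY_n$; at $(\hY,\mathcal{M}_n(\hY))$, whose correction term vanishes identically, yielding $\mathcal{L}_n^{\mathcal{M}_n(\hY)}(\hY)=\mathcal{Y}_n(\hY)-\mathcal{O}_n(\mathcal{M}(\hY))$; and at $(Y,\mathcal{M}_n(\hY))$ for the $Y$-part. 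After substitution the two copies of $\mathcal{O}_n(\mathcal{M}(\hY))$ cancel, leaving precisely the two quadratic remainders and $\mathcal{Y}_n(Y)-Y_n$. Finally \eqref{item3} reuses the expression for $\hY_n-\mathcal{L}_n^{\mathcal{M}_n(\hY)}(\hY)$ obtained inside \eqref{item2} and cancels its $\mathcal{O}_n(\mathcal{M}(\hY))$ against the one supplied by hypothesis \eqref{eqcondition3}, leaving the single quadratic term on the right. Throughout, the only facts used are the linearity of $\mathcal{D}_n$, the Hermitian symmetry needed in \eqref{master}, and the definitions of $\mathcal{M}_n,\mathcal{Y}_n,\tilde{\mathcal{A}}_n,\tilde{\mathcal{C}}_n,\tilde{\mathcal{R}}_n$.
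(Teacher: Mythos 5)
Your proof is correct and rests on the same computation as the paper's: a direct expansion of the left-hand sides using the definitions of $\tilde{\mathcal{A}}_n$, $\tilde{\mathcal{C}}_n$, $\tilde{\mathcal{R}}_n$, $\mathcal{M}_n$, $\mathcal{Y}_n$ together with the hypotheses \eqref{eqcondition1} and \eqref{eqcondition3} --- the paper simply carries out (indeed, only sketches) this completion-of-squares algebra separately for each of the three identities. Your only departure is organizational: you isolate the completed square once, as a master identity valid for any gain $K$ and any $S\in\LL$ (using $\mathcal{M}_n(S)\tilde{\mathcal{R}}_n(S)=-A\mathcal{D}_n(S)L^*$ and the Hermitian symmetry of $\mathcal{D}_n(S)$, both of which you invoke correctly), and then obtain \eqref{item1}, \eqref{item2}, and \eqref{item3} from it by linearity of $\mathcal{D}_n$, which removes the repetition and makes explicit exactly where the extra hypothesis $\hY\in\LL$ is needed.
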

\begin{proof}
Let us show that \eqref{item1} holds. Consider the left-hand side of \eqref{item1} for $n\!\in\!\mathbb{S}_{\eta}$, applying \eqref{eqcondition1} and the definitions of $\mathcal{M}_n(Y)$, the right-hand side of \eqref{item1} is easily obtained. To show that \eqref{item2} holds for $n\in\mathbb{S}_{\eta}$, consider the left-hand side of equality \eqref{item2}, applying the definitions of $\tilde{\mathcal{R}}_n(\hY)$, $\tilde{\mathcal{R}}_n(Y)$, $\mathcal{M}_n(Y)$, and \eqref{eqcondition1}, equality \eqref{item2} holds. Let us show that equality \eqref{item3} holds. Consider the left-hand side of equality \eqref{item3}, applying the definition of $\mathcal{M}_n(\hY)$ and $\tilde{\mathcal{R}}_n(\hY)$, and \eqref{eqcondition3}, the reader can easily obtain equality \eqref{item3}. \\
\noindent The proof of the lemma is complete.
\end{proof}
\begin{lem}\label{lemmathvmathvbar}
Let $\mathcal{V}$ and $\tilde{\mathcal{V}}$, be defined as in \eqref{mathcalV} and \eqref{barmathcalV}, respectively.
Suppose that $\rho(\mathcal{V})<1$ and for some \mbox{$Y\in\HH^{\mathbf{I}n_x,+}$} and $\delta\!>\!0$,
\beq\label{Yminusbarmathv}
Y_n\!-\!\tilde{\mathcal{V}}_n(Y)\!\succeq\!\delta\hat{\gamma}_n\!\left(K_n\!-\!M_n\right)\left(K_n\!-\!M_n\right)^*,\, n\!\in\!\mathbb{S}_{\eta},
\eeq
\noindent with $\tilde{\mathcal{V}}$ defined in \eqref{barmathcalV}, with $\Lambda_{n1}\!\!=\!\!A\!+\!K_nL,\,\Lambda_{n0}\!\!=\!\!A$.\\
\noindent Then, $\rho(\tilde{\mathcal{V}})\!<\!1$.
\end{lem}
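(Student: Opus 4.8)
The plan is to argue by contradiction, combining a Perron--Frobenius argument for the positive operator $\tilde{\mathcal{V}}$ with the Lyapunov-type slack inequality \eqref{Yminusbarmathv} and the hypothesis $\rho(\mathcal{V})<1$. Throughout, $\mathcal{V}$ is understood with $\Gamma_{n1}=A+M_nL$, $\Gamma_{n0}=A$ (the gain $M_n$ being the one appearing in \eqref{Yminusbarmathv}), so that $\mathcal{V}$ and $\tilde{\mathcal{V}}$ differ only through the substitution $M_n\mapsto K_n$ in the $\hat{\gamma}_n$-weighted terms; I abbreviate $E_n\triangleq K_n-M_n$. Since every $\tilde{\mathcal{V}}_n$ is a nonnegatively weighted sum of congruences $\Lambda\mathcal{D}_n(\cdot)\Lambda^*$, the operator $\tilde{\mathcal{V}}$ maps $\HH^{\mathbf{I}n_x,+}$ into itself; the same holds for its adjoint $\tilde{\mathcal{J}}$ (the analogue of $\mathcal{J}$ in \eqref{mathcalJ} with $\Gamma\mapsto\Lambda$, obtained exactly as in Remark \ref{remrhov}), and $\rho(\tilde{\mathcal{V}})=\rho(\tilde{\mathcal{J}})$.

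Suppose, for contradiction, that $\rho(\tilde{\mathcal{V}})\ge 1$. Because $\tilde{\mathcal{J}}$ is a positive operator on the proper cone $\HH^{\mathbf{I}n_x,+}$, the Krein--Rutman / Perron--Frobenius theorem furnishes a nonzero $\mathbf{Z}=[Z_n]_{n=1}^{\mathbf{I}}\in\HH^{\mathbf{I}n_x,+}$ with $\tilde{\mathcal{J}}(\mathbf{Z})=\rho\,\mathbf{Z}$, where $\rho\triangleq\rho(\tilde{\mathcal{V}})\ge 1$. Pairing \eqref{Yminusbarmathv} with $\mathbf{Z}$ in the inner product \eqref{intprod}, using monotonicity of $\langle\mathbf{Z};\cdot\rangle$ under the componentwise PSD ordering together with the adjoint identity $\langle\mathbf{Z};\tilde{\mathcal{V}}(Y)\rangle=\langle\tilde{\mathcal{J}}(\mathbf{Z});Y\rangle=\rho\langle\mathbf{Z};Y\rangle$, I obtain
\[(1-\rho)\langle\mathbf{Z};Y\rangle=\langle\mathbf{Z};Y-\tilde{\mathcal{V}}(Y)\rangle\ge\delta\sum_{n=1}^{\mathbf{I}}\hat{\gamma}_n\,\mathrm{tr}\!\left(Z_nE_nE_n^*\right)\ge 0.\]
Since $\rho\ge 1$ and $Y,\mathbf{Z}\succeq 0$ give $(1-\rho)\langle\mathbf{Z};Y\rangle\le 0$, the entire chain collapses to zero. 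As $\delta>0$ and each summand equals $\mathrm{tr}\!\big((Z_n^{1/2}E_n)^*(Z_n^{1/2}E_n)\big)\ge 0$, this forces $Z_n^{1/2}E_n=0$, hence $Z_nE_n=0$ (equivalently $E_n^*Z_n=0$) for every $n$ with $\hat{\gamma}_n>0$.

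The final step closes the loop. Writing $A_M\triangleq A+M_nL$ and expanding the congruences appearing in $\tilde{\mathcal{J}}$ and $\mathcal{J}$, each $\hat{\gamma}_n$-weighted difference equals $(A+K_nL)^*Z_n(A+K_nL)-A_M^*Z_nA_M=A_M^*Z_nE_nL+L^*E_n^*Z_nA_M+L^*E_n^*Z_nE_nL$, which vanishes termwise once $Z_nE_n=0$; the $(1-\hat{\gamma}_n)$-weighted terms coincide already because $\Gamma_{n0}=\Lambda_{n0}=A$, and the indices with $\hat{\gamma}_n=0$ contribute nothing to either operator. Hence $\mathcal{J}(\mathbf{Z})=\tilde{\mathcal{J}}(\mathbf{Z})=\rho\,\mathbf{Z}$, so $\rho\ge 1$ is an eigenvalue of $\mathcal{J}$ with PSD eigenvector $\mathbf{Z}\neq 0$, giving $\rho(\mathcal{V})=\rho(\mathcal{J})\ge 1$ (using Remark \ref{remrhov}) and contradicting $\rho(\mathcal{V})<1$. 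Therefore $\rho(\tilde{\mathcal{V}})<1$.

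The step I expect to be the main obstacle is the rigorous invocation of Perron--Frobenius for $\tilde{\mathcal{J}}$: one must confirm that $\tilde{\mathcal{J}}$ leaves the cone $\HH^{\mathbf{I}n_x,+}$ invariant and that $\rho(\tilde{\mathcal{V}})$ is \emph{attained} as an eigenvalue with a positive semi-definite eigenvector, rather than merely lying in the spectrum. The remaining algebra --- the trace-positivity deduction $Z_nE_n=0$ and the cross-term cancellation --- is routine, but it is essential that the slack in \eqref{Yminusbarmathv} is exactly proportional to $(K_n-M_n)(K_n-M_n)^*$, since this is precisely what annihilates the discrepancy between $\tilde{\mathcal{J}}$ and $\mathcal{J}$ along $\mathbf{Z}$.
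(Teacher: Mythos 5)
Your proof is correct, but it takes a genuinely different route from the paper's. You argue by contradiction via a cone-theoretic Perron--Frobenius (Krein--Rutman) step: extract a PSD eigenvector $\mathbf{Z}$ of the adjoint $\tilde{\mathcal{J}}$ at the eigenvalue $\rho(\tilde{\mathcal{V}})\ge 1$, pair the slack inequality \eqref{Yminusbarmathv} with $\mathbf{Z}$ to force $Z_n(K_n-M_n)=0$ whenever $\hat{\gamma}_n>0$, and then observe that the cross terms cancel so that $\mathbf{Z}$ is also an eigenvector of $\mathcal{J}$ at the same eigenvalue, contradicting $\rho(\mathcal{V})=\rho(\mathcal{J})<1$; all of these steps check out (the trace-positivity deduction, the expansion $(A+K_nL)^*Z_n(A+K_nL)-(A+M_nL)^*Z_n(A+M_nL)$, and the treatment of indices with $\hat{\gamma}_n=0$ are all sound). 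The paper instead never extracts an eigenvector: it uses a completion-of-squares bound to dominate $\tilde{\mathcal{J}}$ by the inflated operator $\hat{\mathcal{J}}=(1+\epsilon^2)\mathcal{J}$ plus a residual $\hat{\mathcal{Q}}$ built from $(K_n-M_n)L$, chooses $\epsilon$ small enough that $\rho(\hat{\mathcal{J}})<1$, shows via a telescoping inner-product argument (using exactly the slack in \eqref{Yminusbarmathv}, as you anticipated) that $\sum_s\|\hat{\mathcal{Q}}(U(s))\|_1<\infty$ along the iterates $U(t+1)=\tilde{\mathcal{J}}(U(t))$, and concludes summability of $\|U(t)\|_1$ by comparison with a forced recursion, whence $\rho(\tilde{\mathcal{J}})<1$ by \cite[Proposition 2.5]{COSTA2005}, mirroring \cite[Lemma A.8]{COSTA2005}. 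The trade-off is what you yourself flagged: your argument is shorter and avoids the $\epsilon$-tuning and iterate-comparison machinery, but it rests on the fact that the spectral radius of a positive operator on the proper cone $\HH^{\mathbf{I}n_x,+}$ is attained at a PSD eigenvector --- true in finite dimensions (finite-dimensional Krein--Rutman, applied to the restriction of $\tilde{\mathcal{J}}$ to the real space of Hermitian block matrices, whose spectral radius coincides with that of $\tilde{\mathcal{J}}$ since the latter preserves Hermitian structure, cf. Remark \ref{remrhov}), but a tool the paper never invokes; the paper's route stays entirely within the summability criteria of \cite{COSTA2005}, which is why it is the one adopted there.
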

\begin{proof}
Set $\tilde{\mathcal{J}}\!=\!\tilde{\mathcal{V}}^*$. Note that for arbitrary $\epsilon\!>\!\!0$ and $V\!\!\in\!\HH^{\mathbf{I}n_x,+}$,
\begin{align}\label{zeroleqhatgamma}
0\preceq&\hat{\gamma}_n \left[\epsilon\left(A\!+\!M_nL\right)\!-\!\frac{1}{\epsilon}(K_n\!-\!M_n)L\right]^*V_n\times\nonumber\\&\left[\epsilon\left(A\!+\!M_nL\right)\!-\!\frac{1}{\epsilon}(K_n\!-\!M_n)L\right]\!+\nonumber\\
&+(1\!-\!\hat{\gamma}_n)\epsilon^2 A^*V_nA.
\end{align}
\noindent By applying the previous inequality, we get
\beqs
0\preceq \left(1+\epsilon^2 \right)\mathcal{J}_m(V)+\left(1+\frac{1}{\epsilon^2}\right)\mathcal{Q}_m(V),
\eeqs
where
$\mathcal{Q}(\cdot)\!\triangleq\![\mathcal{Q}_m(\cdot)]_{m=1}^{\mathbf{I}}$,
\begin{align*}
&\mathcal{Q}_m(V)\triangleq
{\displaystyle\sum_{n=1}^{\mathbf{I}}}\!q_{mn}\hat{\gamma}_n L^*(K_n\!-\!M_n)^*V_n (K_n\!-\!M_n)L,\nonumber\\
&\hat{\mathcal{Q}}(V)\!\!\triangleq\!\!\left(1+\frac{1}{\epsilon^2}\right)\mathcal{Q}(V).
\end{align*}
\noindent Moreover, we define 
\beqs
\hat{\mathcal{J}}(\cdot)\triangleq\left(1\!+\!\epsilon^2\right)\mathcal{J}(\cdot).
\eeqs
Since \mbox{
$\mathcal{J}\!\!=\!\!\mathcal{V}^*,$} and \mbox{$\rho(\mathcal{V})\!<\!1$} by hypothesis, we have that \mbox{$\rho(\mathcal{J})=\rho(\mathcal{V}) < 1$}. 
Therefore, we can choose $\epsilon\!>\!0$, such that \mbox{$\rho(\hat{\mathcal{J}})\!\!<\!\!1$}.
Let us define for \mbox{$t\!=\!0,\ldots,$} the sequences
\begin{align*}
U(t\!+\!1)&\triangleq\tilde{\mathcal{J}}(U(t)),\qquad\qquad\qquad
U(0)\succeq 0,\\
Z(t + 1)&\triangleq \hat{\mathcal{J}}(Z(t)) + \hat{\mathcal{Q}}(U(t)),\ Z(0)= U(0).
\end{align*}
\noindent At this point, we have to prove that
\beq\label{hatQlessinfty}
\sum\limits_{s=0}^{\infty}\|\hat{\mathcal{Q}}\left(U(s)\right)\|_1\!<\!\infty.
\eeq
\noindent Recalling the definition of norm, the properties of the trace operator, inequality \eqref{Yminusbarmathv}
 and the definition of inner product we obtain:
\begin{align}\label{eq:hatQnorm1}
\|\hat{\mathcal{Q}}\left(U(s)\right)\|_1\!\!=\!\!\sum_{m=1}^{\mathbf{I}} \|\hat{\mathcal{Q}}_m\left(U(s)\right)\|
\!\!\leq\!\!c_0\!\left\langle Y\!-\!\tilde{\mathcal{V}}(Y) ;U(s)\right\rangle,
\end{align} 
\noindent with \mbox{$c_0\triangleq(1/\delta)\left(1+1/\epsilon^2\right)\|L\|^2 \mathbf{I}$}. From \eqref{eq:hatQnorm1}, applying the properties of the inner product, and
taking the sum from $s\!=\!0$ to $\tau$, we get
\begin{align*}
\sum\limits_{s=0}^\tau \|\hat{\mathcal{Q}}\left(U(s)\right)\|_1\leq c_0 \left\langle Y;U(0)\right\rangle.
\end{align*}
\noindent Taking the limit for $\tau \to\infty$, we obtain that \eqref{hatQlessinfty} holds.
Following the same steps provided by \cite[\it Lemma A.8]{COSTA2005}, we can prove that
\begin{align}
0&\leq \sum_{t=0}^{\infty} \|\tilde{\mathcal{J}}^{t}(U(0))\|_1 = \sum_{t=0}^{\infty}\|U(t)\|_1\nonumber\\
&\leq \sum\limits_{t=0}^{\infty} \|Z(t)\|_1<\infty .
\end{align}
\noindent By \cite[\it Proposition 2.5]{COSTA2005}, $\rho(\tilde{\mathcal{J}})\!<\!\!1$. Therefore, \mbox{$\rho(\tilde{\mathcal{V}}) =
\rho(\tilde{\mathcal{J}}
)<1$}.\\\noindent
The proof of the lemma is complete.
\end{proof}
\begin{lem}\label{lemmaximal}
Assume that system \eqref{mathcalG1} is mean square detectable according to Definition \ref{msddef}.\\\noindent 
Then, there exists $Y^+ \in\MM,\,Y^+\succeq Y$, for any $Y\in\MM$, satisfying 
\eqref{FiltCARE0}.
\end{lem}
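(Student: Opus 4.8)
The plan is to realize $Y^+$ as the limit of a monotone non-increasing sequence of positive semi-definite matrices generated by a Riccati-type recursion, and then to check that this limit solves \eqref{FiltCARE0}, lies in $\MM$, and dominates every element of $\MM$. First I would initialize using mean square detectability: by \emph{Definition} \ref{msddef} there is a gain $K=[K_n]_{n=1}^{\mathbf{I}}$ with $\rho(\mathcal{V})<1$ for $\Gamma_{n1}=A+K_nL,\ \Gamma_{n0}=A$. Since $\mathcal{V}$ is a positive operator (\emph{Remark} \ref{remrhov}) with spectral radius below $1$, the Lyapunov equation $Y^{(0)}=\mathcal{V}(Y^{(0)})+\mathcal{O}(K)$ admits the unique solution $Y^{(0)}=\sum_{t\ge 0}\mathcal{V}^{t}(\mathcal{O}(K))\succeq 0$, which is the instance of \eqref{eqcondition1} with $\hat{M}=K$. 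Note $Y^{(0)}\succeq 0$ forces $\tilde{\mathcal{R}}_n(Y^{(0)})\succeq\pi_n^{\infty}HH^*\succ 0$, so $Y^{(0)}\in\LL$. Given $Y^{(s)}\succeq 0$ with $K^{(s)}\triangleq\mathcal{M}(Y^{(s)})$ and $\rho(\mathcal{V}^{(s)})<1$ (writing $\mathcal{V}^{(s)}$ for $\mathcal{V}$ built with $\Gamma_{n1}=A+K^{(s)}_nL,\ \Gamma_{n0}=A$), I define $Y^{(s+1)}\succeq 0$ as the solution of \eqref{eqcondition3} with gain $\mathcal{M}(Y^{(s)})$, i.e. the Lyapunov equation $Y^{(s+1)}=\mathcal{V}^{(s)}(Y^{(s+1)})+\mathcal{O}(K^{(s)})$; this is simultaneously the instance of \eqref{eqcondition1} with $\hat{M}=K^{(s)}$.

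The central point, and the main obstacle, is that $\rho(\mathcal{V}^{(s)})<1$ must be preserved along the recursion so that each Lyapunov step is solvable; this is exactly what \emph{Lemma} \ref{lemmathvmathvbar} is designed to deliver. Completing the square at the optimal gain yields the identity $\mathcal{Y}_n(Y)=\tilde{\mathcal{V}}^{\,\mathcal{M}(Y)}_n(Y)+\mathcal{O}_n(\mathcal{M}(Y))$, where $\tilde{\mathcal{V}}^{\,\mathcal{M}(Y)}$ uses $\Gamma_{n1}=A+\mathcal{M}_n(Y)L$. Applying \eqref{item1} with $\hat{Y}=Y^{(s)}$, $\hat{M}=K^{(s-1)}$ (the gain defining $Y^{(s)}$), and $Y=Y^{(s)}$ gives $Y^{(s)}_n-\mathcal{Y}_n(Y^{(s)})=\hat{\gamma}_n(K^{(s-1)}_n-K^{(s)}_n)\tilde{\mathcal{R}}_n(Y^{(s)})(K^{(s-1)}_n-K^{(s)}_n)^*$, hence $Y^{(s)}_n-\mathcal{V}^{(s)}_n(Y^{(s)})=\mathcal{O}_n(K^{(s)})+\hat{\gamma}_n(K^{(s-1)}_n-K^{(s)}_n)\tilde{\mathcal{R}}_n(Y^{(s)})(K^{(s-1)}_n-K^{(s)}_n)^*\succeq\delta\hat{\gamma}_n(K^{(s-1)}_n-K^{(s)}_n)(K^{(s-1)}_n-K^{(s)}_n)^*$, with $\delta>0$ a uniform lower eigenvalue bound over the finitely many $\tilde{\mathcal{R}}_n(Y^{(s)})\succ 0$. \emph{Lemma} \ref{lemmathvmathvbar}, invoked with the known-stable gain $K^{(s-1)}$ and target gain $K^{(s)}$, then yields $\rho(\mathcal{V}^{(s)})<1$ (for $s=0$, replace $K^{(s-1)}$ by $K$), so the recursion is well defined at every stage.

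Monotonicity follows from \eqref{item3}: with $\hat{Y}=Y^{(s)}$ and $\hat{X}=Y^{(s+1)}$ its right-hand side is $\hat{\gamma}_n(K^{(s-1)}_n-K^{(s)}_n)\tilde{\mathcal{R}}_n(Y^{(s)})(K^{(s-1)}_n-K^{(s)}_n)^*\succeq 0$, so $(Y^{(s)}-Y^{(s+1)})-\mathcal{V}^{(s)}(Y^{(s)}-Y^{(s+1)})\succeq 0$; since $\rho(\mathcal{V}^{(s)})<1$ and $\mathcal{V}^{(s)}$ is positive, $(I-\mathcal{V}^{(s)})^{-1}=\sum_{t\ge 0}(\mathcal{V}^{(s)})^{t}$ is a positive operator and thus $Y^{(s)}\succeq Y^{(s+1)}$. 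For the lower bound, fix any $Y\in\MM$ and apply \eqref{item1} with $\hat{Y}=Y^{(s+1)}$, $\hat{M}=K^{(s)}$: its right-hand side equals $(\mathcal{Y}_n(Y)-Y_n)+\hat{\gamma}_n(K^{(s)}_n-\mathcal{M}_n(Y))\tilde{\mathcal{R}}_n(Y)(K^{(s)}_n-\mathcal{M}_n(Y))^*\succeq 0$ because $Y\in\MM$ ensures $\mathcal{Y}(Y)\succeq Y$ and $\tilde{\mathcal{R}}(Y)\succ 0$; the same positive-inverse comparison gives $Y^{(s+1)}\succeq Y$, for every $s$.

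The sequence $\{Y^{(s)}\}$ is therefore non-increasing and bounded below by $0$, hence converges to some $Y^+\succeq 0$. Since $Y^+\succeq 0$ keeps $\tilde{\mathcal{R}}_n(Y^+)\succ 0$, the maps $\mathcal{M}$ and $\mathcal{V}^{(\cdot)}$ are continuous at $Y^+$, so passing to the limit in $Y^{(s+1)}=\mathcal{V}^{(s)}(Y^{(s+1)})+\mathcal{O}(K^{(s)})$ and using the completed-square identity with $Y=Y^+$ gives $\mathcal{Y}_n(Y^+)=Y^+_n$, i.e. $Y^+$ solves \eqref{FiltCARE0}; consequently $-Y^++\mathcal{Y}(Y^+)=0\succeq 0$ together with $\tilde{\mathcal{R}}(Y^+)\succ 0$ shows $Y^+\in\MM$. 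Maximality is inherited from the uniform lower bound: $Y^{(s)}\succeq Y$ for all $s$ and all $Y\in\MM$ forces $Y^+\succeq Y$, which is the claim. The only delicate ingredient remains the propagation of $\rho(\mathcal{V}^{(s)})<1$, for which the positive-semi-definite surpluses supplied by \eqref{item1}--\eqref{item3} feed exactly into the hypothesis of \emph{Lemma} \ref{lemmathvmathvbar}.
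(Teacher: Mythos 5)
Your proposal is correct and follows essentially the same route as the paper's proof: a gain-updating Lyapunov iteration initialized at the detectability gain, with Lemma~\ref{lemmathvmathvbar} propagating $\rho(\mathcal{V}^{(s)})<1$ along the recursion, the identities \eqref{item1}--\eqref{item3} of Lemma~\ref{lemequations} supplying monotonicity and the lower bound $Y^{(s)}\succeq Y$ for every $Y\in\MM$, and monotone convergence of the non-increasing, bounded-below sequence producing the maximal solution of \eqref{FiltCARE0}. Your only deviation is minor: to verify the hypothesis of Lemma~\ref{lemmathvmathvbar} you use the self-comparison instance of \eqref{item1} (taking $Y=\hY=Y^{(s)}$) combined with the completed-square identity $\mathcal{Y}_n(Y)=\hat{\gamma}_n\left(A+\mathcal{M}_n(Y)L\right)\mathcal{D}_n(Y)\left(A+\mathcal{M}_n(Y)L\right)^*+(1-\hat{\gamma}_n)A\mathcal{D}_n(Y)A^*+\mathcal{O}_n(\mathcal{M}(Y))$, testing against $Y^{(s)}$ itself, whereas the paper applies \eqref{item2} to the pair $\left(Y^{(s)},Y\right)$ with $Y\in\MM$ arbitrary and tests against the difference $Y^{(s)}-Y$; both are valid instantiations of the same lemma, yours having the small advantage of keeping the stability propagation independent of the arbitrary element of $\MM$.
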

\begin{proof}
Consider an arbitrary \mbox{$Y\in\MM$}. We want to show that there exists a decreasing sequence \mbox{$\{Y^l\}_{l=0}^{\infty}$}, \mbox{$Y^l \in\HH^{\mathbf{I}n_x,+}$}, satisfying equation \eqref{OnMl}, for \mbox{$l=0,1,\ldots$}, with \mbox{$M^l \triangleq [M_n^l]_{n=1}^{\mathbf{I}}$}
\begin{align}\label{OnMl}
&Y_n^l\! -\! \mathcal{V}_n^l(Y^l)\!=\!\mathcal{O}_n(M^l),\quad \text{with }\mathcal{V}^l(\cdot)\!\triangleq \![\mathcal{V}_n^l(\cdot)]_{n=1}^{\mathbf{I}},\nonumber\\
&\mathcal{V}_n^l(\cdot)\!\triangleq \!\hat{\gamma}_n A_{n1}^l\mathcal{D}_n(Y^l)A_{n1}^{l*}\!+\!(1\!-\!\hat{\gamma}_n)A_{n0}^l\mathcal{D}_n(Y^l)A_{n0}^{l*},\nonumber\\ 
& M_n^l\! \triangleq\! \mathcal{M}_n(Y^{l\!-\!1}),\ A_{n1}^l \triangleq A\!+\!M_n^lL,\; A_{n0}^l \triangleq  A,
\;n \in \mathbb{S}_{\eta};
\end{align}
\noindent with $Y^l$ such that \mbox{$Y^l\!\succeq\!Y$} and \mbox{$\rho(\mathcal{V}^l)\!<\!1$}, for all $l$.
\noindent We will use inductive arguments starting from \mbox{$l\!=\!0$}. Since system \eqref{mathcalG1} is mean square detectable, there exists a mode-dependent filtering gain \mbox{$M^0\!=\![M_n^0]_{n=1}^{\mathbf{I}}$} such that \mbox{$\rho (\mathcal{V}^0)\!<\!1$} and from \cite[Proposition 3.20]{COSTA2005}, there exists a unique \mbox{$Y^0\!\in\!\HH^{\mathbf{I}n_x,+}$}, solution of \eqref{OnMl}, for \mbox{$l\!=\!0$}. From Lemma \ref{lemequations}\eqref{item1}, recalling that \mbox{$\rho(\mathcal{V}^0)\!<\!1$} applying again \cite[Proposition 3.20]{COSTA2005}, 
it follows that \mbox{$Y^0\!\succeq\!Y$}. Assume now that there exists a decreasing sequence sequence \mbox{$\{Y^l\}_{l=0}^{k -1}$}, with \mbox{$Y^l\!\in\!\HH^{\mathbf{I}n_x,+}$}, unique solution of \eqref{OnMl} and 
\beqs
Y^0\!\succeq\!Y^1\!\succeq\!\ldots\! \succeq\!Y^{k-1}\!\!\succeq\!\!Y,\quad \forall\,Y\in\MM,
\eeqs
\noindent $\rho(\mathcal{V}^l) < 1$.\\
\noindent Setting 
\begin{align*}
\tilde{R}_n^{k-1}&\triangleq \tilde{\mathcal{R}}_n(Y^{k - 1}),\\
M_n^k &\triangleq\mathcal{M}_n(Y^{k-1}),\\
A_{n1}^k &\triangleq A + M_n^k L,
\end{align*}
\noindent and applying Lemma \ref{lemequations} \eqref{item2}, the following inequality holds:
\begin{align*}
&Y_n^{k-1}- Y_n -\mathcal{V}_n^k\left(Y_n^{k - 1}-Y\right)\succeq \\
& \hat{\gamma}_n\left(M_n^k-M_n^{k - 1}\right)\tilde{R}_n^{k - 1}\left(M_n^k-M_n^{k - 1}\right)^*.
\end{align*}
Since \mbox{$\tilde{R}_n^{k\!-\!1}\succ \tilde{\mathcal{R}}_n(Y)\succ 0,$} for \mbox{$n\in \mathbb{S}_{\eta},$} we can find \mbox{$\delta^{k - 1} > 0$}, such that \mbox{$\tilde{R}_n^{k-1}\succ \delta^{k-1}\mathbb{I}_{n_x},$}
Thus, we get:
\begin{align*}
& Y_n^{k - 1} - Y_n - \mathcal{V}_n^k(Y^{k - 1}- Y)\succeq \\
& \delta^{k\!-\!1}\hat{\gamma}_n(M_n^{k}- M_n^{k - 1})(M_n^{k} - M_n^{k\!-\!1})^*.
\end{align*}
\noindent Applying {\it Lemma} \ref{lemmathvmathvbar}, \mbox{$\rho (\mathcal{V}^k)\!<\!1$}, and
from \cite[Proposition 3.20]{COSTA2005}, there exists a unique solution \mbox{$Y^k\!\in\!\HH^{\mathbf{I}n_x,+}$} of equation \eqref{OnMl} for \mbox{$l\!=\!k$}. Thus, 
from Lemma 1 \eqref{item3}, it follows that
\begin{align*}
&\left(Y_n^{k- 1} -Y_n^{k}\right) -		\\
&\hat{\gamma}_n\left(A + M_n^k L\right)\mathcal{D}_n(Y^{k - 1} - Y^{k})\left(A + M_n^k L\right)^*+\\
&- (1 - \hat{\gamma}_n)A\mathcal{D}_n(Y^{k - 1} - Y^{k})A^* =\\
&
\hat{\gamma}_n\left(M_n^k - M_n^{k - 1}\right)^*\tilde{R}_n^{k - 1}\left(M_n^k - M_n^{k- 1}\right)^*\succeq 0,
\end{align*}
\noindent and since \mbox{$\rho(\mathcal{V}^k)\! < \!1$}, we get from \cite[ Proposition 3.20]{COSTA2005}, that \mbox{$Y^{k- 1} - Y^{k} \succeq  0$}, i.e. \mbox{$Y^{k - 1} \succeq Y^k\succeq Y$}. This completes the induction argument.
Since \mbox{$\{Y^l\}_{l=0}^{\infty}$} is a decreasing sequence, such that \mbox{$Y^l \succeq Y$}, for all \mbox{$l\!=\!0,1,\ldots,$} we get that there exists \mbox{$Y^+$}, such that (see \cite{JoachimWeidmann1980}, p.79) \mbox{$Y^l\!\to \!Y^+$}, as \mbox{$l\!\to\!\infty$}. Clearly, \mbox{$Y^+\succeq Y$}, for all \mbox{$Y \in \MM$}, because \mbox{$Y$} is arbitrary. Furthermore,
\mbox{$Y_n^{l}$} satisfies \eqref{OnMl}, and taking the limit for \mbox{$l \to \infty$}, we have 
\mbox{$Y^+\!=\!\mathcal{Y}(Y^+)$}. Moreover, \mbox{$\tilde{R}_n(Y^+) \succeq \tilde{R}_n(Y)\!\succ\!0$,} i.e. \mbox{$Y^+\!\in\!\MM$}.\\
\noindent The proof of the Lemma is complete.
\end{proof}
\begin{proof}[Proof of Theorem \ref{thmmaximal}]
From the Schur complement (see \cite[ Lemma 2.23]{COSTA2005}) we have that \mbox{$Y\! \in\! \HH^{\mathbf{I}n_x,*}$,} satisfies \eqref{optpbriccati} if and only if 
\mbox{$
-Y\!+\!\mathcal{Y}(Y)\! \succeq\! 0,
$}
and \mbox{$\tilde{\mathcal{R}}_n(Y)\! \succ \!0,\,n\! \in\!\mathbb{S}_{\eta}$}
that is \mbox{$Y\!\in\!\MM$}. 
Thus, if \mbox{$Y^+\!\in\!\MM$} is such that \mbox{$Y^+\! \succeq\! Y$}, for all\mbox{$Y\! \in\! \MM$}, then \mbox{$\mathrm{tr}\left(Y_1^+\!+\ldots\!+\!Y_{\mathbf{I}}^+\right)\! \geq \!\mathrm{tr}\left(Y_1\!+\!\ldots\!+\!Y_{\mathbf{I}}\right)$,} and it follows that \mbox{$Y^+$} is a solution of the convex programming Problem \ref{mypb}. On the other hand, suppose that \mbox{$\hY\!\in\! \HH^{\mathbf{I}n_x,*}$} is a solution of the {\it Problem} \ref{mypb}, then \mbox{$\hY \!\in \!\MM$}. 
From the optimality of \mbox{$\hY$}, it follows that 
\mbox{
$
\mathrm{tr}(Y_1^+\!-\!\hY_1)
+\mathrm{tr}(Y_{\mathbf{I}}^+\!-\!\hY_{\mathbf{I}})\!\preceq\!0,
$} 
for all \mbox{$Y^+\!\in\!\MM$}.
 Since the system \eqref{mathcalG1} is mean square detectable, from Lemma \ref{lemmaximal}, there exists \mbox{$Y^+\!\!\succeq\!\!\hY$} satisfying \eqref{FiltCARE0}. Therefore,
\mbox{
$
Y_1^+\!-\!\hY_1\!\succeq\! 0
,\ldots,\;Y_{\mathbf{I}}^+\!-\!\hY_{\mathbf{I}}\!\succeq\!0.
$}
The two inequalities above hold if and only if \mbox{$\hY\!=\!Y^+$}. \\\noindent The proof of the theorem is complete.
\end{proof}
\noindent
\begin{proof}[Proof of Theorem \ref{lemuniquestab}]
Assume that \mbox{$\hY=\bmat\hY_n\emat_{n=1}^M$} is a 
stabilizing solution for the Filtering CARE \eqref{FiltCARE0}, i.e. \mbox{$\hY = \mathcal{Y}(\hY)$}, so that system \eqref{mathcalG1} is mean square detectable according to Definition \ref{msddef}. From  Lemma \ref{lemmaximal}, there exists a  maximal solution \mbox{$Y^+ \in \MM$}, satisfying \mbox{$Y^+ = \mathcal{Y}(Y^+)$}. By equality \eqref{item2} of Lemma \ref{lemequations}, the following holds:
\begin{align*}
&\hY_n - Y^+_n \\
&- \hat{\gamma}_n \left(A + \mathcal{M}_n(\hY)L\right)\mathcal{D}_n(\hY\!-\!Y^+)\left(A\!+\!\mathcal{M}_n(\hY) L\right)^*\!-\\
&(1\!-\!\hat{\gamma}_n)A\mathcal{D}_n(\hY\!-\!Y^+)A^*=
\\
&=\hat{\gamma}_n\!\left(\mathcal{M}_n(\hY)\!-\!\mathcal{M}_n(Y^+)\right)\tilde{\mathcal{R}}_n(Y^+)\left(\!\mathcal{M}_n(\hY)\!-\!\mathcal{M}_n(Y^+)\!\right)^*,
\end{align*}
\noindent for all \mbox{$n \in \mathbb{S}_{\eta}$}.
\\\noindent Since \mbox{$\tilde{\mathcal{R}}(Y^+) \succ 0$}, we have
\begin{align*}
\hat{\gamma}_n\!\left(\!\mathcal{M}_n(\hY)\!-\!\mathcal{M}_n(Y^+)\!\right)\!\tilde{\mathcal{R}}_n(Y^+)\!\left(\!\mathcal{M}_n(\hY)\!-\!\mathcal{M}_n(Y^+)\!\right)^*\!\!\succeq\!\!0.
\end{align*}
\noindent Recalling that \mbox{$\hY$} is a stabilizing solution, we have from \cite[Proposition 3.20]{COSTA2005} that \mbox{$\hY- Y^+ \succeq 0$}. But this also implies \mbox{$\tilde{\mathcal{R}}(\hY)\succeq\tilde{\mathcal{R}}(Y^+)\succ 0$}, therefore \mbox{$\hY\in \MM$}. From Lemma \ref{lemmaximal}, \mbox{$\hY - Y^+ \preceq 0$}. The two inequalities above hold if and only if \mbox{$\hY = Y^+$}.\\
\noindent The proof of the theorem is complete.
\end{proof}
\begin{proof}[{\it Proof of Theorem \ref{thmsep}}]
Assume that Markov jump system \eqref{mathcalG1} is mean square stabilizable with one time step delay in the observation of the actuation channel mode according to Definition \ref{Def:MSstab}, and mean square detectable according to Definition \ref{msddef}. Then, by Definition \ref{Def:MSstab} there exists a mode-dependent gain $[F_l]_{l=1}^{\mathbf{N}}$, that stabilizes the dynamics of $x_k$ in the mean square sense, accounting for the one time-step delay observation of the actuation channel mode. By Definition \ref{msddef}, there exists a mode-dependent filtering gain $[M_n]_{n=1}^{\mathbf{I}}$, that stabilizes the error dynamics in the mean square sense, accounting for the current mode observation of the sensing channel. Therefore, by the upper triangular structure of the matrix {\small $\mathbf{\Gamma}$} in \eqref{mathcalGcl}, the closed-loop system dynamics \eqref{mathcalGcl} can be made mean square stable.\\
\addtolength{\textheight}{-20cm}
Assume that the dynamics \eqref{mathcalGcl} can be made mean square stable. Then,  by the upper triangular structure of the matrix $\mathbf{\Gamma}$ in \eqref{mathcalGcl}, there exists a mode-dependent filtering gain $[M_n]_{n=1}^{\mathbf{I}}$, that stabilizes the error dynamics in the mean square sense, accounting for the current mode observation of the sensing channel. Thus, Markov jump system \eqref{mathcalG1} is mean square detectable according to Definition \ref{msddef}. Moreover, there exists a mode-dependent gain $[F_l]_{l=1}^{\mathbf{N}}$, that stabilizes the dynamics of $x_k$ in the mean square sense, accounting for the one time-step delay mode observation of the actuation channel. Therefore, Markov jump system \eqref{mathcalG1} is mean square stabilizable according to Definition \ref{Def:MSstab}.\\\noindent 
The proof of the theorem is complete.
\end{proof}
\end{document}